\documentclass[11pt]{amsart}
\usepackage{amssymb, enumerate, verbatim, fullpage}
\usepackage{versions}  

\usepackage{graphicx}

\newcommand{\Gr}[2]{\includegraphics[width=#2]{#1}}

\excludeversion{not_arXiv}
\includeversion{arXiv}


\newcommand{\tth}{^{\operatorname{th}}}
\newcommand{\rrd}{^{\operatorname{rd}}}
\newcommand{\ZZ}{\mathbb{Z}}
\newcommand{\QQ}{\mathbb{Q}}
\newcommand{\PP}{\mathbb{P}}

\newcommand{\FF}{\mathbb{F}}

\newcommand{\Aff}{\mathbb{A}}

\newcommand{\im}{\operatorname{im}}
\newcommand{\pc}[1]{Y^{\operatorname{pre}}\left(#1\right)}
\newcommand{\cpc}[1]{X^{\operatorname{pre}}\left(#1\right)}
\renewcommand{\bar}{\overline}
\newcommand{\jpc}[1]{J^{\operatorname{pre}}\left(#1\right)}
\newcommand{\Pic}{\operatorname{Pic}}

\newcommand{\be}{\begin{equation}}
\newcommand{\ee}{\end{equation}}

\DeclareMathOperator{\Spec}{Spec}

\newcommand{\eps}{\varepsilon}


\newtheorem{thm}{Theorem}[section]
\newtheorem*{thm*}{Theorem}
\newtheorem{lem}[thm]{Lemma}
\newtheorem{cor}[thm]{Corollary}
\newtheorem{prop}[thm]{Proposition}

\newtheorem*{conjecture*}{Conjecture}

\theoremstyle{remark}

\newtheorem{remark}[thm]{Remark}

\theoremstyle{definition}


\begin{document}

\title[Pre-images of the Origin]{On the Number of Rational Iterated Pre-images of the Origin Under Quadratic Dynamical Systems}

\author[Faber]{Xander Faber}
\address{
Department of Mathematics and Statistics \\
McGill University \\
Montr\'eal, QC  \\ Canada
}
\email{xander@math.mcgill.ca}

\author[Hutz]{Benjamin Hutz}
\address{
Department of Mathematics and Computer Science \\
Amherst College \\
Amherst, MA \\ USA
}
\email{bhutz@amherst.edu}

\author[Stoll]{Michael Stoll}
\address{
Mathematisches Institut \\
Universit\"at Bayreuth \\
95440 Bayreuth \\ Germany
}
\email{Michael.Stoll@uni-bayreuth.de}

\begin{abstract}
	For a quadratic endomorphism of the affine line defined over the rationals, we consider the problem of bounding the number of rational points that eventually land at the origin after iteration. In the article ``Uniform Bounds on Pre-Images Under Quadratic Dynamical Systems,'' by two of the present authors and five others, it was shown that the number of rational iterated pre-images of the origin is bounded as one varies the morphism in a certain one-dimensional family. Subject to the validity of the Birch and Swinnerton-Dyer conjecture and some other related conjectures for the L-series of a specific abelian variety and using a number of modern tools for locating rational points on high genus curves, we show that the maximum number of rational iterated pre-images is six. We also provide further insight into the geometry of the ``pre-image curves.''
\end{abstract}


\subjclass[2000]{
14G05,
11G18
 (primary);
11Y50
37F10
(secondary)}
\keywords{Quadratic Dynamical Systems, Arithmetic Geometry, Pre-image, Rational Points, Uniform Bound}


\maketitle


\section{Introduction}

	Fix a rational number $c \in \QQ$ and define an endomorphism of the affine line by
	\[
		f_c: \Aff^1_{\QQ} \to \Aff^1_{\QQ}, \qquad f_c(x) = x^2 + c.
	\]
If we define $f_c^N$ to be the $N$-fold composition of the morphism $f_c$, and $f_c^{-N}$ to be the $N$-fold pre-image, then for $a \in \Aff^1(\QQ)$, the set of \textbf{rational iterated pre-images of $a$} is given by
	\begin{equation*} \label{Eqn: Preimages}
		\bigcup_{N \geq 1} f_c^{-N}(a)(\QQ)
			= \{x_0 \in \Aff^1(\QQ): f_c^N(x_0) = a \text{ for some $N \geq 1$}\}.
	\end{equation*}

Heuristically, finding iterated pre-images amounts to solving progressively more complicated polynomial equations, and so rational solutions should be a rarity.  The situation becomes more interesting as we vary $c$, which has the effect of varying the morphism $f_c$. A special case of the main theorem in \cite{FHIJMTZ} shows that, independent of $c$, there is a bound on the size of the set of rational iterated pre-images:
	
\begin{thm}[{\cite[Thm.~1.2 for $B=D=1$]{FHIJMTZ}}]
\label{Thm: FHIJMTZ}
	Fix a point $a \in \Aff^1(\QQ)$ and define the quantity
	\[ 	
		\kappa(a) = \sup_{c \in \QQ}
			\# \Bigg\{\bigcup_{N \geq 1} f_c^{-N}(a)(\QQ) \Bigg\}.
	\]
	Then $\kappa(a)$ is finite.
\end{thm}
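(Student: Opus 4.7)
The plan is to exploit the ``pre-image curves'' $Y_N(a) \subset \Aff^2$ cut out by $f_c^N(x) = a$, which parametrize pairs $(c,x)$ with $x$ a pre-image of $a$ of depth $N$ under $f_c$. The key compatibility is that if $(c,x) \in Y_N(a)(\QQ)$, then $(c, f_c^k(x)) \in Y_{N-k}(a)(\QQ)$ for every $0 \leq k \leq N$. Consequently, the projection to the $c$-line gives a decreasing sequence of subsets of $\QQ$ as $N$ grows: any $c$ that appears as a first coordinate of a rational point of $Y_N(a)$ also appears in the image of every $Y_M(a)$ with $M \leq N$.

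The heart of the argument is to show that for $N$ sufficiently large, say $N \geq N_0 = N_0(a)$, the smooth projective model of $Y_N(a)$ has geometric genus at least two. I would attempt this inductively by applying Riemann--Hurwitz to the degree-two cover $Y_N(a) \to Y_{N-1}(a)$ (obtained by forgetting the outermost pre-image step) and identifying the ramification loci in terms of the unique critical point $x=0$ of $f_c$ and its forward orbit. This is the main obstacle: since the curves live in a one-parameter family over the $c$-line, the ramification must be controlled uniformly, and the analysis must be sufficiently precise to conclude that the genus eventually exceeds one. Granting this, Faltings' theorem gives $|Y_N(a)(\QQ)| < \infty$ for all $N \geq N_0$.

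Let $C_0 \subset \QQ$ denote the (finite) set of $c$-coordinates of points in $Y_{N_0}(a)(\QQ)$. The compatibility above forces that if $c \notin C_0$, then $f_c$ has no rational pre-image of $a$ at any depth $N \geq N_0$, so
\[
	\Bigg|\bigcup_{N \geq 1} f_c^{-N}(a)(\QQ)\Bigg|
		\leq \sum_{N=1}^{N_0 - 1} 2^N = 2^{N_0} - 2.
\]
For each of the finitely many exceptional $c \in C_0$, the set $\bigcup_N f_c^{-N}(a)(\QQ)$ is still finite by a standard height argument: its elements have canonical height $\hat{h}_{f_c}(a)/2^N$ for various $N$, hence bounded absolute height, and Northcott's theorem applies. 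Taking the maximum of $2^{N_0}-2$ with the finitely many resulting counts for $c \in C_0$ produces the uniform bound $\kappa(a) < \infty$.
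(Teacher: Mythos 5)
This statement is not proved in the paper at all: it is imported verbatim from \cite{FHIJMTZ}, and the present article only describes the ingredients of that proof (geometric irreducibility and the genus formula $(N-3)2^{N-2}+1$ for $\cpc{N,a}$, quoted here as Theorem~\ref{Thm: Irreducible Genus}; Faltings applied to the genus-$5$ curve $\pc{4,a}$; and a height-theoretic argument to pass from depth $4$ to all higher depths). Your outline reproduces exactly that strategy --- the projection compatibility $(x,c)\mapsto(f_c^k(x),c)$, Faltings on $Y_N(a)$ for $N$ large, and Northcott via $\hat{h}_{f_c}(x_0)=2^{-N}\hat{h}_{f_c}(a)$ for the finitely many exceptional parameters --- and those parts are correct as written.

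The genuine gap is the step you yourself flag and then grant: that the smooth projective model of $Y_N(a)$ has genus at least $2$ for $N\ge N_0(a)$. This is the entire technical content of the cited theorem, and your Riemann--Hurwitz plan is incomplete in two specific ways. First, you need geometric irreducibility of $Y_N(a)$ before ``the'' smooth projective model and its genus are even meaningful; if the curve broke into components of genus $0$ or $1$, Faltings would give nothing, so irreducibility (FHIJMTZ Cor.~2.4, Thm.~3.2, restated as Theorem~\ref{Thm: Irreducible Genus}) must be proved, not assumed. Second, the branch locus of the degree-$2$ map $Y_N(a)\to Y_{N-1}(a)$ consists of the points with $x=0$ on the top curve, i.e.\ parameters $c$ with $f_c^N(0)=a$, together with behavior at infinity; counting these requires knowing when the polynomial $f_c^N(0)-a\in\QQ[c]$ has multiple roots, and multiple roots are precisely what produce singular (possibly reducible) fibers. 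The paper explicitly singles out $a=-1/4$ as a value where this degeneration matters (the finiteness of parameters with a rational $N\tth$ pre-image for $N\ge 4$ is asserted only for $a\ne -1/4$), yet the theorem you are proving covers every $a\in\Aff^1(\QQ)$, so a complete argument must either establish nonsingularity/irreducibility for the given $a$ and large $N$ or handle the degenerate components separately. Until that geometric input is supplied, the proof is a correct reduction of the theorem to its hard core rather than a proof.
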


	In the present paper we study Theorem~\ref{Thm: FHIJMTZ} in the special case $a=0$ and obtain several refinements. The proof of Theorem~\ref{Thm: FHIJMTZ} shows that when $a \in \QQ \smallsetminus \{-1/4\}$ and $N \geq 4$, there are only finitely many values $c \in \QQ$ for which $f_c^{-N}(a)(\QQ)$ is nonempty. So if we define
		\[
			\bar{\kappa}(a) = \limsup_{c \in \QQ}  \#  \Bigg\{\bigcup_{N \geq 1} f_c^{-N}(a)(\QQ) \Bigg\},
		\]
then one obtains a trivial upper bound $\bar{\kappa}(a) \leq 14$ for $a  \neq -1/4$ by counting the degrees of $f_c$, $f_c^2$, and $f_c^3$. We can substantially improve this for $a=0$:

\begin{thm}
\label{Thm: Kappa bar bound}
	One has $\bar{\kappa}(0) = 6$.
\end{thm}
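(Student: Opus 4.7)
Since Theorem~\ref{Thm: FHIJMTZ} (with $a=0$) tells us that for $N\geq 4$ only finitely many $c\in\QQ$ admit any rational pre-image at level $N$, the $\limsup$ is controlled entirely by pre-images at levels $1,2,3$. Thus my plan is to enumerate the possible rational pre-image ``sub-trees'' of depth $3$ and to show (i) that every sub-tree containing more than $6$ rational nodes lies on a curve with only finitely many rational points, while (ii) a sub-tree with $6$ rational nodes lies on a curve with infinitely many rational points. I would first normalize: any rational pre-image forces $f_c^{-1}(0)\subset\QQ$, so $c=-r^2$ for some $r\in\QQ$; write $c=-r^2$ throughout. In this parametrization $f_c^{-1}(0)=\{\pm r\}$, and rational pre-images beyond level~$1$ appear exactly when certain expressions of the form $r^2\pm(\text{parent})$ are squares in $\QQ$.

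Next I would list the finitely many sub-tree shapes. Rational pre-images come in $\pm$ pairs over each rational ancestor, so the triple $(n_1,n_2,n_3)$ of rational pre-images at levels $1,2,3$ satisfies $n_1\in\{0,2\}$, and (with $n_1=2$) $n_2\in\{0,2,4\}$ and $n_3\le 2n_2$. The configurations with $n_1+n_2+n_3>6$ are $(2,4,2)$, $(2,4,4)$, $(2,4,6)$, $(2,4,8)$, and $(2,2,4)$. Each is cut out in affine space by a collection of ``=\,square'' equations on top of $c=-r^2$: for level~$2$, the conditions $s^2=r^2+r$ and $t^2=r^2-r$; for level~$3$, conditions of the form $u^2=r^2\pm s$, etc. The lower bound $\bar\kappa(0)\geq 6$ comes from the configuration $(2,4,0)$: the curve
\[
  C\col\quad s^2 = r^2+r,\qquad t^2 = r^2-r
\]
is the smooth intersection of two quadrics in $\PP^3$, hence of genus~$1$. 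I would pick a visible rational point, put $C$ in Weierstrass form, exhibit a point of infinite order, and conclude that $C(\QQ)$ is infinite, yielding infinitely many $c\in\QQ$ with at least $6$ rational iterated pre-images of $0$.

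For the upper bound I would handle each of the five ``bad'' configurations separately. Each gives a curve $C'$ equipped with a dominant map to one of the level-$2$ curves (either the conic $s^2=r^2+r$, or to $C$ itself), obtained by adjoining further square roots; an Euler characteristic / Riemann--Hurwitz computation shows each $C'$ has genus~$\geq 2$. Faltings' theorem therefore guarantees finiteness of $C'(\QQ)$, but for the theorem we need an explicit finite list, so I would actually determine $C'(\QQ)$ via a case-by-case analysis: rank computations on Jacobians of coverings, Chabauty--Coleman at a well-chosen prime, and the Mordell--Weil sieve to clean up the residual points. One of these Jacobians will presumably resist a direct descent, and here I expect the use of analytic rank data (under BSD and related $L$-function conjectures) to pin down the Mordell--Weil rank and make Chabauty applicable.

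The main obstacle, then, is step (ii): unconditionally bounding $C'(\QQ)$ for the configuration whose Jacobian has a rank computation that current descent techniques do not resolve. This is precisely the point flagged in the abstract: for that one abelian variety we would invoke BSD (to get the analytic rank) together with the standard conjectures on $L$-series needed to convert analytic information into a working bound on the Mordell--Weil rank, then run Chabauty and the Mordell--Weil sieve. Once all five configurations are eliminated (except for a finite exceptional set of $c$), the $\limsup$ can only see configurations with at most $6$ rational pre-images, and together with step~(i) this gives $\bar\kappa(0)=6$.
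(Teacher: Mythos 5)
Your overall strategy (stratify by the shape of the rational pre-image tree at levels $1,2,3$ and study one curve per configuration) is reasonable, but there are two concrete problems. First, the lower bound as you describe it would fail at the execution step: the curve $s^2=r^2+r$, $t^2=r^2-r$ is \emph{not} a smooth intersection of two quadrics --- its projective closure has a node at $(r:s:t:w)=(0:0:0:1)$ (both gradients are proportional there), and its normalization has geometric genus $0$, not $1$. So there is no Weierstrass form and no ``point of infinite order'' to exhibit. The conclusion you want is still true, and in fact easier: the paper's Proposition~\ref{Prop: Four Second Pre-Images} parametrizes this curve rationally (via the conic $a^2+b^2=2$), giving the explicit infinite family $c=-(t^2+1)^4/\bigl(16t^2(t^2-1)^2\bigr)$ with six rational pre-images. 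You should also note that the clean $\pm$-pair combinatorics you use to enumerate configurations degenerates exactly at $c=0,-1$ (where a pre-image equals $0$), but these are finitely many values and harmless for a $\limsup$.

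Second, your upper bound is both heavier than necessary and misplaces the conditionality. Since $\bar\kappa(0)$ is a $\limsup$, you never need an ``explicit finite list'' of rational points on the bad-configuration curves: it suffices that only finitely many $c$ support each configuration with more than six pre-images, so Faltings alone would do --- \emph{provided} you verify that every geometric component of each of your five fiber-product curves has genus at least $2$, which you assert via Riemann--Hurwitz but do not check (these normalized fiber products can be reducible, and a genus $0$ or $1$ component would sink the argument). More importantly, invoking BSD here concedes too much: Theorem~\ref{Thm: Kappa bar bound} is unconditional in the paper. The paper kills all configurations with three or more third pre-images by Proposition~\ref{Prop: Elliptic Curve}, a height argument on the rank-$1$ elliptic curve $\cpc{3,0}\colon v^2=u^3-u+1$ (note this cannot be a Faltings/Chabauty statement --- that curve has infinitely many rational points; the point is that distinct multiples of the generator give distinct $c$-values outside a checkable range), and it kills the $(2,4,2)$ configuration by Proposition~\ref{Prop: 2-4-2}, where the relevant genus-$3$ curve has a genus-$2$ quotient whose Jacobian has rank at most $1$ by an unconditional $2$-descent, finished by Chabauty at $p=3$. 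The BSD and L-series hypotheses in the paper enter only for $\kappa(0)$ (the supremum), where one must determine \emph{all} rational points on the genus-$5$ curve $\cpc{4,0}$ and needs $\operatorname{rank}\jpc{4,0}(\QQ)=3$ (Theorems~\ref{Thm: Bound for X(4,0)} and~\ref{Thm: BSD bound for the rank}); they are not needed for $\bar\kappa(0)=6$. As written, your proposal leaves the descent/Chabauty work on five unspecified curves entirely speculative and would, at best, prove a conditional version of an unconditional theorem.
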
	
	
	The content of this result amounts to the following three statements: (1) For any $c \in \QQ$, the map $f_c$ has at most two rational third pre-images of the origin; (2) For any $c \in \QQ$, if $f_c$ has four rational second pre-images of the origin, then it has no rational third pre-image; and (3) There are only finitely many rational $c \in \QQ$ such that $f_c$ has a rational fourth pre-image. The first and second statements are proved in \S\ref{Sec: Second and Third Pre-images}, while the third statement follows from Faltings' Theorem applied to a particular curve of genus~5. 
	
	Note that the two values $c=0$ and $c=-1$ correspond to morphisms $f_c$ for which $0$ is periodic of period~$1$ and~$2$, respectively.  For these values of $c$, the origin has at least one rational $N\tth$ pre-image for arbitrary $N$. A byproduct of the proof of Theorem~\ref{Thm: Kappa bar bound} is the following conditional result. 

\begin{thm}
\label{Thm: Kappa bound}
	Suppose that for $c \in \QQ \smallsetminus \{0,-1\}$ the morphism $f_c$ admits no rational $4\tth$ pre-images of the origin. Then $\kappa(0) = 6$.
\end{thm}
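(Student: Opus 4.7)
The plan is to establish the upper bound $\kappa(0) \leq 6$ by combining the hypothesis with statements~(1) and~(2) from the discussion following Theorem~\ref{Thm: Kappa bar bound}, and to obtain the matching lower bound directly from Theorem~\ref{Thm: Kappa bar bound}.

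For the upper bound, the first step is to dispose of the two periodic cases by direct computation. For $c = 0$ the map is $x \mapsto x^2$, so the only rational iterated pre-image of $0$ is $0$ itself, giving count~$1$. For $c = -1$, tracing the rational forward orbits shows $\bigcup_{N \geq 1} f_{-1}^{-N}(0)(\QQ) = \{-1, 0, 1\}$, giving count~$3$. Both are at most~$6$. For $c \in \QQ \smallsetminus \{0, -1\}$ the origin is not periodic, so the exact-level pre-image sets are pairwise disjoint. The hypothesis rules out rational level-$4$ pre-images, and any rational level-$N$ pre-image with $N \geq 5$ would map forward to a rational level-$4$ pre-image under $f_c^{N-4}$, so levels $\geq 4$ contribute nothing. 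Level~$1$ contributes at most~$2$, since $x^2+c=0$ has at most two rational roots. Level~$2$ contributes $0$, $2$, or~$4$: the $x \mapsto -x$ symmetry, valid because $0$ is not periodic, groups rational pre-images into pairs. By statement~(1) level~$3$ contributes at most~$2$, while statement~(2) rules out the simultaneous occurrence of four level-$2$ and any level-$3$ rational pre-images. The case analysis now closes: either the level-$2$ count is~$4$ and the level-$3$ count is~$0$, totalling $2+4+0=6$; or the level-$2$ count is at most~$2$ and the level-$3$ count at most~$2$, totalling at most $2+2+2=6$.

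For the lower bound, Theorem~\ref{Thm: Kappa bar bound} supplies at least one rational $c$ with $\#\bigcup_{N \geq 1} f_c^{-N}(0)(\QQ) \geq 6$. Such a $c$ is neither $0$ nor $-1$ by the computations above, so the upper bound just established forces the count to equal exactly~$6$. Hence $\kappa(0) \geq 6$, and combining the two inequalities yields $\kappa(0) = 6$. The argument has no independent difficulty; it merely packages three ingredients — statements~(1) and~(2) proved earlier in the paper, and the conditional vanishing supplied by the hypothesis — into a bookkeeping calculation. The real work has already been done in Theorem~\ref{Thm: Kappa bar bound} and in \S\ref{Sec: Second and Third Pre-images}, so no substantive obstacle arises here.
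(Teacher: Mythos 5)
Your proposal is correct and follows essentially the same route as the paper: handle $c=0$ and $c=-1$ by direct computation, use the hypothesis to eliminate all rational pre-images at levels four and higher for the remaining $c$, bound levels one through three by Propositions~\ref{Prop: Elliptic Curve} and~\ref{Prop: 2-4-2} (your statements (1) and (2)), and get the lower bound from the family of parameters with six pre-images underlying Theorem~\ref{Thm: Kappa bar bound}. Your explicit parity observation at level two (which needs only $f_c^2(0)\neq 0$, i.e.\ $c\notin\{0,-1\}$) is a harmless refinement of what the paper leaves implicit.
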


    		
	Define an algebraic set $\pc{N,a}$ in the $(x,c)$-plane by the equation $f_c^N(x) = a$. The algebraic points $(x_0, c_0) \in \pc{N,a}(\bar{\QQ})$ are in bijection with the $N\tth$ pre-images $x_0 \in f_{c_0}^{-N}(a)$.  For generic~$a$, $\pc{4,a}$ is a nonsingular curve of genus~5; the finiteness of its set of rational points played a key role in the proof of Theorem~\ref{Thm: FHIJMTZ}.  We apply the method of Chabauty and Coleman to determine this set.

\begin{thm}
\label{Thm: Bound for X(4,0)}
	Let $\jpc{4,0}$ denote the Jacobian of the complete nonsingular curve birational to $\pc{4,0}$. Assume the rank of $\jpc{4,0}(\QQ)$ is $3$. Then there is no value $c \in \QQ \smallsetminus \{0, -1\}$ such that $f_c$ has a rational $4\tth$ pre-image of the origin, and hence $\kappa(0) = 6$.
\end{thm}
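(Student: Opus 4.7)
The strategy is Chabauty--Coleman, enabled by the hypothesis that $\rank \jpc{4,0}(\QQ) = 3$ is strictly less than the genus~$5$ of the smooth projective model $C$ of $\pc{4,0}$. First I would produce an explicit nonsingular projective model of $\cpc{4,0}$, either by desingularizing the plane curve $f_c^4(x) = 0$ directly or by exploiting the obvious involution $(x, c) \mapsto (-x, c)$ coming from $f_c(x) = f_c(-x)$, which presents $\cpc{4,0}$ as a double cover of a lower-genus quotient and makes the geometry more tractable. I would then catalogue all rational points of $C$: the points at infinity, the fibres above $c = 0$ and $c = -1$ (where $0$ lies in a periodic orbit, so the defining polynomial in $x$ splits completely over $\QQ$), and any additional points picked up by a modest point search. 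Under the birational identification with $\pc{4,0}$, these known rational points should correspond only to $c \in \{0, -1\}$ or to non-affine points that do not yield genuine $4\tth$ pre-images.

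Next I would choose a prime $p$ of good reduction for $C$ and verify numerically, by reduction at $p$ and perhaps at one or two auxiliary primes, that three explicit rational divisor classes are independent in $\jpc{4,0}(\QQ)$; combined with the assumed rank bound, these then generate a finite-index subgroup of the free part. Since $\rank \jpc{4,0}(\QQ) = 3 < 5$, the annihilator in $H^0(C_{\QQ_p}, \Omega^1)$ of the image of $\jpc{4,0}(\QQ) \otimes \QQ_p$ under the $p$-adic logarithm is a $\QQ_p$-subspace of dimension at least~$2$. Using Coleman integration against a basis of this annihilator, I would construct locally analytic functions on $C(\QQ_p)$ that vanish identically on $C(\QQ)$.

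A residue-disk analysis, bounding the number of zeros of these Coleman integrals via Newton polygons of their power series expansions, should show that in each residue disk of $C(\FF_p)$ the number of $\QQ$-points is at most the number already accounted for in the catalogue. If ambiguity remains in some disk, a Mordell--Weil sieve at auxiliary primes is a natural supplement to pin down the last points. The main obstacle I anticipate is computational rather than conceptual: producing a workable nonsingular model of $\cpc{4,0}$, exhibiting three explicit independent classes in $\jpc{4,0}(\QQ)$ (which may require performing descent on factors appearing in an isogeny decomposition of $\jpc{4,0}$), and carrying out Coleman integration on a non-hyperelliptic genus-$5$ curve, which demands care in choosing coordinates and a basis of differentials.

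Once $C(\QQ)$ has been pinned down, the conclusion is that $f_c^{-4}(0)(\QQ) = \emptyset$ for every $c \in \QQ \smallsetminus \{0, -1\}$. Combining this with Theorem~\ref{Thm: Kappa bound} immediately yields $\kappa(0) = 6$.
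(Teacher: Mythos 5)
Your proposal is correct and follows essentially the same route as the paper: the paper runs Chabauty--Coleman at $p=3$ on the canonical model of $\cpc{4,0}$ in $\PP^4$ from Proposition~\ref{affine.comp}, pairs the two differentials annihilating the rank-3 Mordell--Weil group (whose three independent generators among the ten known points were already exhibited in Theorem~\ref{thm_jac_properties}) against points in the kernel of reduction, and finishes with a residue-disk analysis (a power-series/Hensel argument in the two disks where a single differential does not suffice), with no Mordell--Weil sieve needed. One minor inaccuracy in your sketch, which does not affect the argument: at $c=-1$ the polynomial $f_{-1}^4(x)$ does not split completely over $\QQ$; the fibres above $c=0$ and $c=-1$ each contribute just one affine rational point ($P_9$ and $P_{10}$).
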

	
	We remark that it is fairly easy to see that the rank of~$\jpc{4,0}(\QQ)$ is at least~$3$. With the currently available methods, it seems to be practically impossible to give an unconditional proof that the rank of~$\jpc{4,0}(\QQ)$ actually equals~$3$. We can, however, compute sufficiently many terms of the L-series of this Jacobian so that (assuming standard conjectures on L-series) we can numerically evaluate the L-series and its first few derivatives at the point~$s = 1$. The conjecture of Birch and Swinnerton-Dyer in its generalized form for abelian varieties then  provides us with an upper bound for the rank. Doing the computations, we obtain the following result.

\begin{thm}
\label{Thm: BSD bound for the rank}
	Assume the standard conjectures (analytic continuation and functional equation) for the L-series of~$\jpc{4,0}$. Then the third derivative of this L-series at $s = 1$ does not vanish. If in addition the (weak form of the) conjecture of Birch and Swinnerton-Dyer is satisfied for~$\jpc{4,0}$, then the rank of~$\jpc{4,0}(\QQ)$ is~$3$, and hence $\kappa(0) = 6$.
\end{thm}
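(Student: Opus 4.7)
The plan is to assemble enough numerical data about the L-series of $\jpc{4,0}$ so that, granting the assumed analytic continuation and functional equation, its order of vanishing at $s=1$ can be rigorously pinned down to be exactly $3$. Combined with the weak form of BSD this gives $\rank \jpc{4,0}(\QQ) = 3$, at which point Theorem~\ref{Thm: Bound for X(4,0)} supplies the concluding equality $\kappa(0) = 6$. The overall strategy is the now-standard one for verifying the analytic rank of a higher-dimensional Jacobian: determine the Euler factors at enough primes, then use the approximate functional equation to evaluate $L$ and its first few derivatives at $s=1$ with a certified error bound.

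First I would work with an explicit model of $\cpc{4,0}$, construct a regular proper model over $\ZZ$, and use it to identify the primes of bad reduction, the conductor exponent at each, and the corresponding local Euler polynomials. For each good prime $p$ up to some cutoff $P$ (to be chosen large enough for the final numerical step), I would compute the degree-$10$ local L-polynomial of Frobenius by counting $\#\cpc{4,0}(\FF_{p^k})$ for $k = 1, \dots, 5$ and using Newton's identities to recover the characteristic polynomial from its power sums. The conductor, the gamma factors, and the sign of the functional equation (which must be $-1$ for the parity to be consistent with rank~$3$) then complete the data needed to write down the approximate functional equation for $L(s,\jpc{4,0})$ and its derivatives.

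Using this data I would evaluate $L(1)$, $L'(1)$, $L''(1)$, and $L'''(1)$ via the truncated approximate functional equation, with explicit tail bounds coming from the Ramanujan-style estimates on the Dirichlet coefficients $a_n$. Taking $P$ and the working precision large enough should force the first three values to lie within their error bars from zero while $L'''(1)$ lies provably outside its error bar from zero, yielding the analytic-rank bound $\mathrm{ord}_{s=1} L(s,\jpc{4,0}) \le 3$. Combined with the elementary lower bound $\rank \jpc{4,0}(\QQ) \ge 3$ noted in the excerpt, and the weak-BSD inequality $\mathrm{ord}_{s=1} L(s,\jpc{4,0}) \ge \rank \jpc{4,0}(\QQ)$, this pins the rank to $3$ and allows Theorem~\ref{Thm: Bound for X(4,0)} to be applied.

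The hard part is computational rather than conceptual. The point counts $\#\cpc{4,0}(\FF_{p^k})$ for $k \le 5$ are expensive already at moderate primes, and since the conductor is likely to be fairly large one needs Euler factors at a substantial number of primes in order to shrink the tail errors in the approximate functional equation down to the level needed to certify the non-vanishing of $L'''(1)$. Secondary but nontrivial challenges are correctly computing the conductor exponent and local factor at each bad prime, especially any wildly ramified ones, and rigorously propagating numerical errors through the L-series computation so that the final non-vanishing statement is a genuine proof rather than a convincing numerical observation.
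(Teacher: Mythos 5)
Your overall route is the paper's route: compute the degree-$10$ Euler factors at good primes by counting $\#\cpc{4,0}(\FF_{p^e})$ for $e\le 5$ and using the local functional equation $L_p(X)=p^5X^{10}L_p(1/pX)$, handle the bad primes ($2$, $23$, $2551$) via regular models, feed conductor, sign and coefficients into an approximate-functional-equation evaluation (Dokchitser's method) of $L$ and its derivatives at $s=1$, conclude $L'''(\jpc{4,0},1)\neq 0$, and combine weak BSD with the known lower bound $\rank\jpc{4,0}(\QQ)\ge 3$ and Theorem~\ref{Thm: Bound for X(4,0)}. However, there is a genuine gap where you dismiss the bad-prime analysis as a ``secondary'' challenge: at $p=2$ the reduction of $\jpc{4,0}$ is totally unipotent, and while a regular model over $\ZZ_2$ gives the Euler factor $L_2(X)=1$ and the \emph{tame} conductor exponent ($10$), it does not give the \emph{wild} part $w$ of the conductor at $2$ --- that quantity is simply not readable from a regular proper model over $\ZZ$, and without the exact conductor Dokchitser's machinery cannot be run (trying all $w$ up to the general Brumer--Kramer bound $w\le 40$ is, as the paper remarks, computationally prohibitive). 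The paper's resolution is a dedicated argument (Proposition~\ref{Prop-cond} with Lemmas~\ref{Lemma-tame3} and~\ref{Lemma-tame7}): one shows $\jpc{4,0}$ acquires good reduction over the tamely ramified extension $\QQ_2(\sqrt[21]{2})$ by exhibiting, in the special fibers of regular models over $\QQ_2(\sqrt[3]{2})$ and $\QQ_2(\sqrt[7]{2})$, multiplicity-one components of genera $1,1$ and $3$ whose genera sum to $5$; hence $w=0$ and the conductor is $2^{10}\cdot 23^3\cdot 2551$. Your proposal contains no idea playing this role, and it is the crux of the proof.

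Two smaller points. First, you fix the sign of the functional equation by asserting it ``must be $-1$ for the parity to be consistent with rank $3$''; as a proof step this is circular, since rank $3$ is what you are trying to establish. The paper instead determines the sign by numerically testing the functional equation with both signs (it holds convincingly with $-1$ and visibly fails with $+1$), the parity of the known subgroup of rank $3$ serving only as a heuristic. Second, your phrasing of weak BSD as the inequality $\mathrm{ord}_{s=1}L\ge\rank$ still yields the conclusion when combined with $L'''(1)\neq 0$ and $\rank\ge 3$, so that part is fine; and your ambition to certify the numerics with rigorous tail bounds is stronger than what the paper does, but harmless since the statement is conditional in any case.
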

	
	We present these conditional results for two reasons. First, by working with the curves $\pc{N,a}$, our problem can be reduced to the classical Diophantine pastime of finding rational points on a curve of high genus. Our setting provides a nice example on which to illustrate the use of several modern tools for finding rational points: the Mordell-Weil group of the Jacobian, the method of Chabauty-Coleman, and the Weil conjectures \cite{FPS, Poonen, Stoll_Independence_2006, Stoll3, vanluijk}. Second, in order to carry out these calculations we produce an explicit quasi-projective embedding of the curve $\pc{N,a}$. It is our hope that the simple nature of this embedding will be of assistance in future studies of the arithmetic of pre-images.

	In the next section, we recall the analogy between one-dimensional dynamical systems and elliptic curves; it often provides inspiration for dynamical research. In the elliptic curve setting, the result analogous to Theorem~\ref{Thm: FHIJMTZ} is Manin's result on $m$-power torsion \cite{ManinTorsion}. 
In \S\ref{Sec: Geometry} we briefly summarize the necessary background on pre-image curves developed in \cite{FHIJMTZ}. The entirety of \S\ref{Sec: Alternate} is devoted to exhibiting properties and consequences of a useful projective embedding of the curve $\pc{N,a}$. In \S\ref{Sec: Second and Third Pre-images} we prove Theorems~\ref{Thm: Kappa bar bound} and~\ref{Thm: Kappa bound}, and in \S\ref{Sec: Arithmetic} we turn to the arithmetic and geometric study of the Jacobian $\jpc{4,0}$ in order to prove Theorem~\ref{Thm: Bound for X(4,0)}. In \S\ref{Sec: BSD} we explain the $L$-series computations needed to deduce Theorem~\ref{Thm: BSD bound for the rank}. 
One step in these computations consists in showing that the wild part of the conductor at~2 of $\jpc{4,0}$, which has totally unipotent reduction at~2, vanishes. This involves the consideration of the special fibers of regular models of $\cpc{4,0}$ over certain tamely ramified extensions of~$\QQ_2$.
The final three sections make heavy use of the algebra and number theory system \textit{Magma} \cite{magma}.
\begin{arXiv} A Magma script for the relevant computations is included with this arXiv distribution. \end{arXiv} \begin{not_arXiv} A Magma script for the relevant computations is included with the arXiv distribution of this article \cite{Faber_Hutz_Stoll_Origin_arXiv_2010}. \end{not_arXiv}
	

\section{The elliptic curve analogy}
	Many phenomena in the arithmetic theory of dynamical systems have strong analogues in the theory of abelian varieties. For a fixed positive integer $m$ and an elliptic curve $E/\QQ$, denote the multiplication-by-$m$ map by $[m]: E \to E$. Let $\mathcal{O}$ be the origin for the group law on $E(\QQ)$. The key to the analogy is the following fact: the set of rational iterated pre-images of $\mathcal{O}$ by the morphism $[m]$ is precisely the $m$-power torsion subgroup of the Mordell-Weil group
	\[
		E[m^{\infty}](\QQ) = \bigcup_{N\geq 1} [m]^{-N}(\mathcal{O})(\QQ).
	\]
To continue the analogy, we replace our family of quadratic dynamical systems with the family of all elliptic curves and ask if there exists a uniform bound on $E[m^{\infty}](\QQ)$ as we vary the elliptic curve $E$.  The answer is given by Mazur's uniformity theorem for torsion in the Mordell-Weil group $E_{\operatorname{tors}}(\QQ) \subset E(\QQ)$. The following weak form of Mazur's theorem parallels Theorem~\ref{Thm: FHIJMTZ}. 
	
\begin{thm}[\cite{Mazur_Torsion_1978}]
	Let $E/\QQ$ be an elliptic curve. Then $\#E_{\operatorname{tors}}(\QQ) \leq 16$. In particular, if we define the
	quantity
		\[
			\kappa' = \sup_{E / \QQ}
			\#\left\{\bigcup_{N\geq 1}  [m]^{-N}(\mathcal{O})(\QQ)\right\},
		\]
	then $\kappa'$ is finite.
\end{thm}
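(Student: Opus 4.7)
The bound $\#E_{\operatorname{tors}}(\QQ) \leq 16$ is Mazur's celebrated 1978 theorem on torsion in elliptic curves over~$\QQ$, so there is no realistic plan to reprove it here; my strategy is simply to invoke it. For context, Mazur's argument proceeds by studying $\QQ$-rational points on the modular curves $X_1(N)$, the coarse moduli spaces parametrizing elliptic curves equipped with a point of order~$N$. The heart of the proof, and the main obstacle one would face in any attempt to reconstruct it, is showing that for $N$ outside the explicit set $\{1,\ldots,10,12\}$, every rational point of $X_1(N)$ is a cusp. Mazur achieves this through a deep study of the Eisenstein quotient of $J_1(N)$, careful analysis of reduction modulo small primes, and a descent on an appropriate abelian variety quotient. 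The outcome is the stronger classification statement that $E_{\operatorname{tors}}(\QQ)$ is isomorphic to one of exactly fifteen groups, the largest being $\ZZ/2\ZZ \times \ZZ/8\ZZ$, of order~$16$.

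Granting this, the ``in particular'' clause is a one-line consequence and requires no new ideas. A $\QQ$-rational point $P$ lies in $[m]^{-N}(\mathcal{O})(\QQ)$ precisely when $[m^N]P = \mathcal{O}$, that is, when $P$ is a $\QQ$-rational $m^N$-torsion point of $E$. Taking the union over $N \geq 1$ therefore gives
\[
    \bigcup_{N \geq 1} [m]^{-N}(\mathcal{O})(\QQ) \;=\; E[m^{\infty}](\QQ) \;\subseteq\; E_{\operatorname{tors}}(\QQ),
\]
so the cardinality on the left is bounded by $\#E_{\operatorname{tors}}(\QQ) \leq 16$, uniformly in~$E$. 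Taking the supremum over all elliptic curves $E/\QQ$ yields $\kappa' \leq 16$, which is in particular finite, as claimed.
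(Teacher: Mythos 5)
Your proposal is correct and matches the paper's treatment: the paper likewise cites Mazur's theorem as a black box and derives the finiteness of $\kappa'$ from the identification $\bigcup_{N\geq 1}[m]^{-N}(\mathcal{O})(\QQ) = E[m^{\infty}](\QQ) \subseteq E_{\operatorname{tors}}(\QQ)$, exactly as you do.
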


We have chosen not to give an explicit value for $\kappa'$ in the statement above in order to draw out the analogy with Theorem~\ref{Thm: FHIJMTZ}. In fact, $\kappa'=16$ is an optimal choice since there exist elliptic curves $E/\QQ$ with torsion subgroup isomorphic to $\ZZ/2\ZZ \times \ZZ / 8\ZZ$.

The second statement in the theorem above, which says that the rational $m$-power
torsion is universally bounded for elliptic curves over~$\QQ$, is originally due
to Manin~\cite{ManinTorsion}, but without an explicit bound.

	
\section{The geometry of $\pc{N,a}$}
\label{Sec: Geometry}

Here we summarize the necessary geometric theory of pre-image curves developed in \cite{FHIJMTZ}. Most of the purely geometric results in \cite{FHIJMTZ} assume that the base field is algebraically closed. We will adapt the statement of Theorem~\ref{Thm: Irreducible Genus} below to account for this disparity. It is an adjustment in viewpoint; no additional proof is necessary.

Let $k$ be a field of characteristic different from~$2$.  As in the introduction, we define a morphism $f_c: \Aff^1_k \to \Aff^1_k$ for any $c \in k$ by the formula
	\[
		f_c(x) = x^2 + c.
	\]
We could view $f_c$ as an endomorphism of $\PP^1_k$, but the point at infinity is totally invariant for this type of morphism, and hence dynamically uninteresting. Fix a basepoint $a \in k$ and a positive integer $N$. Define an algebraic set
	\[
		\pc{N,a} = V\left(f_c^N(x) - a\right) \subset \Aff^2_k = \Spec k[x,c].
	\]
If $\pc{N,a}$ is geometrically irreducible, we define the \textbf{$N\tth$ pre-image curve}, denoted $\cpc{N,a}$, to be the unique complete nonsingular curve birational to $\pc{N,a}$. When we say a curve $C/k$ is nonsingular, we mean that it is nonsingular after extending scalars to the algebraic closure.\footnote{It is a standard fact in algebraic geometry that such a curve exists and is unique when $k$ is algebraically closed. Uniqueness follows in general from the fact that the curve is proper, but existence is a little trickier in the case of arbitrary $k$. See Corollary~\ref{Other fields} for a direct proof of existence in our case.} Recall also that the gonality of a curve $C/k$ is the minimum degree of a nonconstant morphism $C \to \PP^1$ (defined over $k$).

\begin{thm}[{\cite[Cor.~2.4, Thm.~3.2, \& Thm.~3.6]{FHIJMTZ}}]
\label{Thm: Irreducible Genus}
	Let $a \in k$, and let $N \geq 1$ be an integer for which $\pc{N,a}$ is nonsingular.
	Then $\pc{N,a}$ is geometrically irreducible. Moreover, $\cpc{N,a}$ is geometrically
	irreducible and has genus $(N-3)2^{N-2} + 1$. If $N \geq 2$, then $\cpc{N,a}$ has gonality $2^{N-2}$.
\end{thm}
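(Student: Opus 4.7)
The plan is a descent from $\bar k$: invoke the cited results of \cite{FHIJMTZ} on the base change $\pc{N,a}_{\bar k} := \pc{N,a} \times_k \bar k$ and transport each conclusion back to $k$. First, geometric irreducibility of $\pc{N,a}$ over $k$ is by definition the irreducibility of $\pc{N,a}_{\bar k}$ as a $\bar k$-scheme, which is precisely \cite[Cor.~2.4]{FHIJMTZ} applied over the algebraically closed field~$\bar k$.

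Next I would construct $\cpc{N,a}$ over $k$ and deduce its genus. Geometric irreducibility forces $k$ to be algebraically closed in the function field $K := k(\pc{N,a})$, so $K/k$ is a regular function field in one variable. The standard dictionary between smooth projective curves and such function fields (concretely, normalize the closure of $\pc{N,a}$ in $\PP^2_k$, or in $\PP^1_k \times \PP^1_k$) yields a unique smooth projective $k$-curve $\cpc{N,a}$ with function field $K$. Its base change to $\bar k$ has function field $\bar k \cdot K$, and so coincides with the smooth projective $\bar k$-curve treated in \cite[Thm.~3.2]{FHIJMTZ}; hence $\cpc{N,a}$ is geometrically irreducible, and because the geometric genus of a smooth projective geometrically integral curve is preserved under base field extension, its genus equals $(N-3)2^{N-2}+1$.

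For the gonality, the lower bound is automatic: any nonconstant $k$-morphism $\cpc{N,a} \to \PP^1_k$ base-changes to a nonconstant $\bar k$-morphism of the same degree, so the $k$-gonality is at least the $\bar k$-gonality $2^{N-2}$ supplied by \cite[Thm.~3.6]{FHIJMTZ}. For the matching upper bound I would iterate the obvious tower of $k$-rational double covers $\pc{N,a} \to \pc{N-1,a}$, $(x,c) \mapsto (f_c(x), c)$, to obtain a degree-$2^{N-2}$ $k$-morphism $\cpc{N,a} \to \cpc{2,a}$, and then compose with a $k$-rational parametrization of the genus-0 target. The main obstacle lies in this final composition: over a non-algebraically-closed field a smooth genus-0 curve need not be $\PP^1_k$, so without a $k$-rational point on $\cpc{2,a}$ the $k$-gonality can exceed $2^{N-2}$ by a factor of~$2$. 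The theorem therefore really amounts to an equality of geometric gonalities, together with the observation --- immediate in all cases relevant to the sequel, and in particular for $k = \QQ$, $a = 0$, where $(x,c) = (0,0)$ lies on $\pc{2,0}$ and so forces $\cpc{2,0} \cong \PP^1_\QQ$ --- that a $k$-rational point on $\cpc{2,a}$ collapses the two notions. This is exactly the ``adjustment in viewpoint'' the authors refer to, and all other assertions descend cleanly from \cite{FHIJMTZ}.
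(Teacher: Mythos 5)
Your descent argument for the first two assertions is fine and is consonant with what the paper actually does: the paper gives no new proof of this theorem (it is quoted from the earlier article, with the remark that passing from $\bar k$ to $k$ is only ``an adjustment in viewpoint''), and the one genuinely $k$-rational issue you address --- the existence of the smooth projective model $\cpc{N,a}$ over $k$ --- is settled there not by function-field generalities but by the explicit complete-intersection model of Proposition~\ref{affine.comp} (see Corollary~\ref{Other fields} and the footnote in \S\ref{Sec: Geometry}). So for geometric irreducibility and the genus your route is acceptable and essentially the intended one.

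The gap is in the gonality statement. You prove the upper bound $2^{N-2}$ for the $k$-gonality only under the extra hypothesis that $\cpc{2,a}$ has a $k$-rational point, you verify that hypothesis only for $k=\QQ$, $a=0$, and you then retreat to the claim that the theorem ``really amounts to'' an equality of geometric gonalities. That is a weakening: the theorem asserts $k$-gonality $2^{N-2}$ for every $k$ of characteristic $\neq 2$ and every $a\in k$ with $\pc{N,a}$ nonsingular, and it is true as stated, because the rational point you need always exists. In the model of Proposition~\ref{affine.comp}, the points at infinity $(\pm 1:\cdots:\pm 1:0)$ of $\cpc{N,a}$ are rational over the prime field; for $N=2$ the model is the plane conic $Z_1^2+Z_1Z_2-Z_0^2-aZ_2^2=0$, which contains $(1:\pm 1:0)$ and is therefore $k$-isomorphic to $\PP^1$ for every $a$. (Equivalently, the $k$-rational points at infinity of $\cpc{N,a}$ itself map under your tower to $k$-points of $\cpc{2,a}$; note also that nonsingularity of $\pc{N,a}$ with $N\geq 2$ forces $a\neq -1/4$, which is exactly the case in which $\pc{2,a}$ would degenerate.) Composing your degree-$2^{N-2}$ map $\cpc{N,a}\to\cpc{2,a}$ with this isomorphism gives the upper bound over $k$ unconditionally, so the ``factor of $2$'' you worry about never occurs; with that single observation added, your argument proves the theorem in its stated generality rather than a weakened form of it.
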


We end this section with the result that ties the previous theorem into the special case $a = 0$.

\begin{prop}[{\cite[Prop.~4.8]{FHIJMTZ}}]
\label{Prop: Smooth}
	Let $k = \QQ$. For every $N \geq 1$, the curve $\pc{N,0}$ is nonsingular.
\end{prop}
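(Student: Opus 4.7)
The plan is to verify directly that the polynomial $F(x,c) := f_c^N(x) \in \ZZ[x,c]$ has no common zero with both of its partial derivatives over $\bar\QQ$. First I would compute, by an easy induction using $f_c^{i+1}(x) = f_c^i(x)^2 + c$, the explicit formulas
\[
F_x = 2^N \prod_{i=0}^{N-1} f_c^i(x), \qquad F_c = \sum_{j=0}^{N-1} 2^{N-1-j} \prod_{i=j+1}^{N-1} f_c^i(x).
\]

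Suppose for contradiction that $(x_0,c_0) \in \bar\QQ^2$ is a singular point of $\pc{N,0}$, and set $b_i := f_{c_0}^i(x_0)$. Since $\ch \QQ = 0$, the vanishing of $F_x$ forces $b_k = 0$ for some $k \in \{0,1,\dots,N-1\}$; let $k$ be the smallest such index. Once the forward orbit of $x_0$ reaches~$0$, it tracks the orbit of~$0$, so $b_{k+j} = \phi_j(c_0)$ for $j \geq 0$, where $\phi_j(c) := f_c^j(0) \in \ZZ[c]$. Combined with $b_N = 0$, this yields $\phi_M(c_0) = 0$ with $M := N-k$. Every term in the expansion of $F_c(x_0,c_0)$ indexed by $j < k$ contains the factor $b_k = 0$; after reindexing by $j' = j-k$, the surviving sum matches the formula for $F_c$ evaluated at the point $(0,c_0) \in \pc{M,0}$, which a short differentiation of $\phi_M(c) = f_c^M(0)$ identifies with $\phi_M'(c_0)$. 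Therefore a singular point would force $\phi_M$ and $\phi_M'$ to share a root in $\bar\QQ$ for some $1 \leq M \leq N$.

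The main obstacle is then to establish that $\phi_M \in \ZZ[c]$ is separable for every $M \geq 1$. My plan is to verify this by reduction modulo~$2$. Differentiating the recursion $\phi_M = \phi_{M-1}^2 + c$ gives $\phi_M' = 2\phi_{M-1}\phi_{M-1}' + 1$, and reducing mod~$2$ shows by induction (base $\phi_1'(c) = 1$) that $\phi_M' \equiv 1 \pmod 2$ identically. In the Sylvester matrix of $\phi_M$ (monic of degree $2^{M-1}$) and $\phi_M'$ (of formal degree $2^{M-1}-1$ with leading coefficient $2^{M-1}$), reducing entries mod~$2$ produces an upper-triangular matrix with ones on the diagonal: the $2^{M-1}-1$ rows from $\phi_M$ place the leading~$1$ of $\phi_M$ in the first $2^{M-1}-1$ diagonal positions, while the $2^{M-1}$ rows from $\phi_M' \equiv 1$ each contain a single~$1$ lying exactly on the remaining diagonal positions. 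Hence $\operatorname{Res}(\phi_M, \phi_M')$ is an odd integer, in particular nonzero, so $\phi_M$ has no repeated roots in $\bar\QQ$. This contradicts the conclusion of the previous paragraph, and no singular point can exist.
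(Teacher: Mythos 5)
Your proof is correct. The paper does not reprove this statement---it is imported from \cite[Prop.~4.8]{FHIJMTZ}---but your argument is essentially the standard one behind that result: the Jacobian criterion reduces smoothness of $\pc{N,0}$ to the separability of the polynomials $\phi_M(c) = f_c^M(0)$ for $1 \le M \le N$, and that separability (Gleason's lemma) is proved exactly by your observation that $\phi_M' \equiv 1 \pmod 2$, your odd-resultant computation being an equivalent packaging of the usual $2$-adic integrality argument.
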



\section{A Projective Embedding of $\cpc{N,a}$}\label{Sec: Alternate}

The goal of this section is to describe a closed immersion of $\pc{N,a}$ into affine $N$-space whose image has a projective closure that is a complete intersection of quadrics with no singularities on the hyperplane at infinity. It follows that if $\pc{N,a}$ is nonsingular, then the projective closure is isomorphic to $\cpc{N,a}$. We can use this embedding to gain an explicit description of the \textbf{points at infinity} --- the points of $\cpc{N,a} \smallsetminus \pc{N,a}$.

The results in this section are geometric in nature (and not arithmetic), so we will work over an arbitrary field $k$ of characteristic different from $2$. For $a\in k$, define a morphism $\psi\colon \pc{N,a} \to \Aff^N$ by
    \begin{equation*}
        \psi(x,c) = \left(x, f_c(x), f_c^2(x), f_c^3(x), \ldots, f_c^{N-1}(x) \right).
    \end{equation*}

    \begin{lem}\label{Lem: Closed im}
        The morphism $\psi\colon \pc{N,a} \to \Aff^N$ is a closed immersion. If $\Aff^N$ has coordinates $z_0, \ldots, z_{N-1}$, then the ideal defining the image of $\psi$ is
        \begin{equation*}
            I = \left( z_{N-1}^2 + z_i - z_{i-1}^2 - a : i = 1, 2, \ldots, N-1 \right).
        \end{equation*}
    \end{lem}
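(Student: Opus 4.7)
The plan is to exhibit an explicit inverse morphism to~$\psi$ defined on the closed subscheme $Z = V(I) \subset \Aff^N$, which will simultaneously show that $\psi$ factors through~$Z$, that it is an isomorphism onto~$Z$, and hence that it is a closed immersion whose image has ideal exactly~$I$. The guiding idea is that the defining relations $z_{N-1}^2 + z_i - z_{i-1}^2 - a = 0$ are designed to force the identifications $z_i = z_{i-1}^2 + c$ where $c := a - z_{N-1}^2$. So one should recover the pair $(x,c)$ from the coordinates $(z_0,\dots,z_{N-1})$ via $x = z_0$ and $c = a - z_{N-1}^2$.

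Concretely, I would define a morphism $\pi\colon \Aff^N \to \Aff^2$ by $(z_0,\dots,z_{N-1}) \mapsto (z_0,\; a-z_{N-1}^2)$. First I would check that $\pi$ restricted to~$Z$ lands in $\pc{N,a}$: writing $c = a - z_{N-1}^2$, the $i$th defining equation of~$I$ rearranges to $z_i = z_{i-1}^2 + c$, so by induction $z_i = f_c^i(z_0)$ for $i=0,\dots,N-1$, and in particular $f_c^N(z_0) = f_c(z_{N-1}) = z_{N-1}^2 + c = a$. Conversely, $\psi$ clearly satisfies the relations of~$I$ on $\pc{N,a}$: substituting $z_j = f_c^j(x)$ into the $i$th generator gives $f_c^{N-1}(x)^2 + f_c(f_c^{i-1}(x)) - f_c^{i-1}(x)^2 - a = f_c^N(x) - a$, which vanishes on $\pc{N,a}$. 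So $\psi$ induces a ring map $\bar\psi^*\colon k[z_0,\dots,z_{N-1}]/I \to k[x,c]/(f_c^N(x)-a)$ and $\pi|_Z$ induces a ring map $\bar\pi^*$ the other way.

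Finally I would verify that $\bar\psi^*$ and $\bar\pi^*$ are mutually inverse. One direction is immediate from the definitions: $\bar\pi^*\bar\psi^*(x) = z_0\mapsto x$ and $\bar\pi^*\bar\psi^*(c) = a - f_c^{N-1}(x)^2 = c$ on~$\pc{N,a}$. The other direction is the induction above, applied in the ring $k[z_0,\dots,z_{N-1}]/I$: using the generators of~$I$ one shows $\bar\psi^*\bar\pi^*(z_i) = f_c^i(z_0) = z_i$ for each $i$.

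The main thing that could go wrong is identifying the ideal of the image \emph{precisely} as~$I$, as opposed to some ideal with the same radical; but this is exactly what is secured by constructing a two-sided inverse at the level of coordinate rings, which avoids any delicate primary-decomposition or Jacobian-criterion check. The only genuine content in the argument is the inductive identification $z_i = f_c^i(z_0)$ modulo~$I$, and this is an immediate telescoping from the generators of~$I$, so I do not expect any substantive obstacle.
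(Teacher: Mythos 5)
Your argument is correct and complete: exhibiting the explicit inverse $x \mapsto z_0$, $c \mapsto a - z_{N-1}^2$ and checking both composites at the level of coordinate rings does establish that $\psi$ induces an isomorphism of $\pc{N,a}$ onto $V(I)$, hence is a closed immersion whose image has ideal exactly $I$ (not merely an ideal with the same radical). The only genuine computation is the telescoping induction $z_i \equiv f_c^i(z_0) \pmod{I}$ with $c = a - z_{N-1}^2$, which you carry out correctly, and it handles $N=1$ uniformly. The paper reaches the same conclusion by a slightly different route: it first notes that $\psi^*$ is surjective (since $\psi^*(z_0) = x$ and $\psi^*(z_1 - z_0^2) = c$) and then identifies the kernel as $I$ through a chain of elimination isomorphisms
\[
  k[z_0,\dots,z_{N-1}]/I \;\cong\; k[x,c,z_1,\dots,z_{N-1}]/I_1 \;\cong\; \cdots \;\cong\; k[x,c]/(f_c^N(x)-a),
\]
removing one variable $z_i$ at each step using the same identity $f_c^j(x) = f_c^{j-1}(x)^2 + c$ that drives your induction. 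Your two-sided inverse packages that elimination into a single closed formula, which is arguably cleaner and makes the ``ideal, not just radical'' point automatic; the paper's step-by-step version has the minor advantage of displaying the intermediate ideals $I_j$, which foreshadows the homogenized generators used later for the projective closure. One cosmetic remark: in your final paragraph the labels $\bar\pi^*\bar\psi^*$ and $\bar\psi^*\bar\pi^*$ are swapped relative to which composite acts on $x,c$ versus on the $z_i$, but the intended verification is unambiguous and correct.
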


    \begin{proof}
        It suffices to prove that the induced homomorphism on rings of regular functions
        \begin{eqnarray*}
            \psi^*\colon\, k[z_0, \ldots, z_{N-1}] &\to& k[x,c] / (f_c^N(x) - a ) \\
            	z_i & \mapsto & f_c^i(x)
        \end{eqnarray*}
        is surjective with kernel $I$. Since $\psi^* (z_0) = x$ and $\psi^*(z_1 - z_0^2) = c$, we see that $\psi^*$ is surjective.  When $N=1$ we have $k[x,c] / (x^2 + c - a ) \cong k[x]$, showing that $\psi$ is an isomorphism and thus $I$ is trivial.  To exhibit the kernel of $\psi^*$ for $N \geq 2$, we give an explicit elimination calculation. First, we choose equivalent generators of the ideal $I$.  For $N \geq 2$, we keep the first generator of $I$ and replace the $i\tth$ generator by its difference with the first:
        \begin{equation*}
            I = \left(z_{N-1}^2 + z_1 - z_0^2 - a \right) + \left( z_i - z_{i-1}^2 - z_1 + z_0^2: i = 2, 3, \ldots, N-1\right).
        \end{equation*}
        The key fact that allows us to simplify inductively is that $f^{j-1}_c(x)^2 + c = f^j_c(x)$ for each $j \geq 1$. Now for each $j = 1, 2, \ldots, N-1$, define an ideal $I_j$ of $k[x,c, z_j, \ldots, z_{N-1}]$ by
		\begin{equation*}
			I_j = \left(z_{N-1}^2 + c - a, z_j - f_c^j(x)\right) + \left(z_i - z_{i-1}^2 - c : i = j+1, \ldots, N-1\right).
		\end{equation*}
        The map
		\begin{align*}
				k[z_0, \ldots, &z_{N-1}] / I  \to  k[x,c, z_1, \ldots, z_{N-1}] / I_1 \\
					z_0 &\mapsto x \quad \text{and} \quad z_i \mapsto z_i, \quad i \geq 1
		\end{align*}
        is an isomorphism. Indeed, the inverse of the map is given by sending $c$ to $z_1 - z_0^2$. Now we proceed by a chain of $k$-algebra isomorphisms given by eliminating one of the $z_i$'s at each step. We have
    	\begin{equation*}
    		\begin{aligned}
    			k[z_0, \ldots, z_{N-1}] / I  & \cong   k[x,c, z_1, \ldots, z_{N-1}] / I_1 \\
    				& \cong k[x, c, z_2, \ldots, z_{N-1}] / I_2 \\
    				& \  \vdots \\
    				& \cong  k[x, c, z_{N-1}] / I_{N-1} \\
    				& \cong  k[x,c] / (f_c^{N-1}(x)^2 +c - a) \\
    				& \cong  k[x,c] / (f_c^N(x) - a).
    		\end{aligned}
    	\end{equation*}
        It follows from the definitions that the map that induces the isomorphism from the first algebra to the last algebra is precisely $\psi^*$.
    \end{proof}

    The next proposition describes the projective closure of the image of $\psi$. If $\PP^N$ has homogeneous coordinates $Z_0, \ldots, Z_N$, let us identify $\Aff^N$ with the subset of $\PP^N$ where $Z_N \not= 0$.

\begin{prop}\label{affine.comp}
		\mbox{}
       	\begin{enumerate}
		[\textup(a\textup)]
    		\item\label{compint} The projective closure of the image of $\psi$ is a complete intersection of quadrics  with homogenous ideal
				\begin{equation*}
					J = \left(Z_{N-1}^2 + Z_iZ_N - Z_{i-1}^2 - aZ_N^2 : i = 1, 2, 3, \ldots, N-1 \right).
				\end{equation*}
    			    		
    		\item\label{kvalpt} The points of $V(J)$ on the hyperplane $Z_N = 0$ have homogeneous coordinates
    		    \begin{equation*}
    				\left( \epsilon_0 : \cdots : \epsilon_{N-1} : 0 \right), \qquad \epsilon_i = \pm 1.
                \end{equation*}
In particular, there are $2^{N-1}$ of them. Moreover, they are all nonsingular points of $V(J)$.
    			
    		\item\label{2^N-1pts} If $\pc{N,a}$ is nonsingular, then $\cpc{N,a} \cong V(J)$ and the complement of the affine part $\cpc{N,a} \smallsetminus  \pc{N,a}$	consists of $2^{N-1}$ points.
    	\end{enumerate}
\end{prop}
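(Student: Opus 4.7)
I would prove the three parts in the logical order (b), (a), (c), since (a) relies on (b) to control the intersection with $\{Z_N=0\}$ and (c) is a short consequence of the first two.

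For (b), I would set $Z_N = 0$ in each of the $N-1$ generators of $J$. The generator indexed by $i$ reduces to $Z_{N-1}^2 = Z_{i-1}^2$, giving $Z_{i-1} = \pm Z_{N-1}$ for $i = 1, \ldots, N-1$. Since homogeneous coordinates cannot all vanish, $Z_{N-1}\neq 0$, so rescaling to $Z_{N-1}=1$ produces exactly the $2^{N-1}$ claimed points. For smoothness at such a point $P = (\epsilon_0 : \cdots : \epsilon_{N-2} : 1 : 0)$, I would work in the chart $Z_{N-1} = 1$ and compute the $(N-1)\times N$ Jacobian of the dehomogenized defining polynomials at $P$. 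The submatrix consisting of the columns $\partial/\partial Z_0, \ldots, \partial/\partial Z_{N-2}$ has a single nonzero entry per row, namely $-2\epsilon_{i-1}$ in column $Z_{i-1}$ of row~$i$; these are all nonzero since $\ch k \neq 2$, so the Jacobian has full rank $N-1$ and $P$ is a smooth point of $V(J)$.

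For (a), I would observe that the generators of $J$ are precisely the homogenizations with respect to $Z_N$ of the generators of the ideal $I$ from Lemma~\ref{Lem: Closed im}. Hence $V(J) \cap \{Z_N \neq 0\} = V(I) \cong \pc{N,a}$, so $V(J)$ contains the projective closure of $\psi(\pc{N,a})$. Since $\pc{N,a}$ is a hypersurface in $\Aff^2$, it is pure of dimension one, and since $V(J)$ is cut out by $N-1$ forms in $\PP^N$, every component of $V(J)$ has dimension at least one. Combined with the finiteness supplied by (b), this forces every component of $V(J)$ to have dimension exactly one and to meet the open chart $Z_N \neq 0$; in particular, $V(J)$ has no components supported at infinity, so $V(J) = \overline{\psi(\pc{N,a})}$. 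The codimension then matches the number of defining forms, making $V(J)$ a complete intersection.

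Part (c) follows immediately: if $\pc{N,a}$ is nonsingular, then $V(I)$ is nonsingular, and (b) shows the $2^{N-1}$ points of $V(J)\smallsetminus V(I)$ are also nonsingular, so $V(J)$ is a complete nonsingular projective curve birational to $\pc{N,a}$, which by the uniqueness of such a model must be $\cpc{N,a}$. The complement $\cpc{N,a} \smallsetminus \pc{N,a}$ is then the finite set described in (b). The only real obstacle is in part (a), where one must rule out excess components of $V(J)$ hiding entirely in $\{Z_N=0\}$; the explicit point count in (b) is precisely what is needed to preclude this and to certify the complete-intersection condition.
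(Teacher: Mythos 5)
Your proof is correct and takes essentially the same route as the paper's: identify the points at infinity by setting $Z_N=0$, check smoothness there via the Jacobian criterion (your chart $Z_{N-1}=1$ gives the same diagonal-type full-rank matrix the paper gets after dehomogenizing at $Z_0$), and combine Krull's bound (each component of an intersection of $N-1$ quadrics in $\PP^N$ has dimension at least one) with the finiteness of the points at infinity to conclude $V(J)$ is the projective closure, with (c) following from uniqueness of the nonsingular complete model. The only differences --- doing (b) before (a) and the choice of affine chart --- are cosmetic.
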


    \begin{proof}
        Let $J$ be the ideal defined in the statement of part~\eqref{compint}. We will begin by working out various properties of the scheme $V(J)$, and then we will prove that it is actually the projective closure of the image of $\psi$. Let's calculate the set-theoretic intersection of $V(J)$ with the hyperplane $Z_N = 0$. Killing $Z_N$ in all of the generators of $J$ gives the system
    	\begin{equation*}
    		Z_0^2 = Z_1^2 = \cdots = Z_{N-1}^2.
    	\end{equation*}
        As one of the coordinates must be nonzero, we may assume that all of these squares are equal to~1, and consequently all of the coordinates must be~$\pm1$. Thus we obtain the set of $k$-valued points described in the statement of~\eqref{kvalpt}. After scaling the coordinates so that the first equals~$1$, we find the other $N-1$ nonzero coordinates may be either of $\pm1$. This proves there are $2^{N-1}$ points on $V(J)$ with $Z_N= 0$.

        Next we claim that $V(J)$ has pure dimension~$1$. Indeed, it is an intersection of $N-1$ hypersurfaces, so each irreducible component of $V(J)$ must have dimension at least~$1$.
Let $W$ be the projective closure of $\im(\psi)$. If we homogenize the generators of the ideal $I$ in the previous lemma, we get exactly the ideal $J$. This shows $W \subset V(J)$ and the two schemes agree on their intersection with $\Aff^N = \{Z_N \not= 0\}$. Since $W$ has dimension~$1$, and since $V(J)$ has only finitely many points outside $\Aff^N$, we find $V(J)$ has dimension at most~$1$. We are now forced to conclude that $V(J)$ has pure dimension~$1$.  Moreover, we may infer that $W = V(J)$, as otherwise $V(J)$ would have a component of dimension zero. We have completed the proof of~\eqref{compint}.

        To see that all of the points of $V(J)$ on the hyperplane $Z_N = 0$ are nonsingular, we apply the Jacobian criterion. Dehomogenizing with respect to $Z_0$ and labeling the affine coordinates $(z_1,\ldots,z_{N})$ shows the algebraic set $V(J) \cap \{Z_0 \not= 0\}$ is cut out by the polynomials
    	\begin{eqnarray*}
    		g_1 &=&  z_{N-1}^2 + z_1z_N -1 - az_N^2  \\
    		g_2 &=& z_{N-1}^2 + z_2z_N -z_1^2 - az_N^2 \\
    		g_3 &=& z_{N-1}^2 + z_3z_N -z_2^2 - az_N^2 \\
    			& \vdots & \\
    		g_{N-1} &=& z_{N-1}^2 + z_{N-1}z_N -z_{N-2}^2 - az_N^2.
    	\end{eqnarray*}
Notice that when $z_N = 0$, none of the other $z_j$'s vanish.  The Jacobian matrix evaluated at $z_N = 0$ is given by
    	\begin{equation*}
    		\left[ \frac{\partial g_i}{\partial z_j}\right]_{z_N=0}
    			= \begin{bmatrix}
    				0 & 0 & 0 & \cdots &0 & 2z_{N-1} & z_1 \\
    				-2z_1 & 0 & 0 & \cdots & 0 & 2z_{N-1} & z_2 \\
    				0 & -2z_2 & 0 & \cdots & 0 & 2z_{N-1} & z_3 \\
    				0 & 0 & -2z_3 & \cdots & 0  & 2z_{N-1} & z_4 \\
    				\vdots & \vdots & \vdots & \ddots & \vdots & \vdots & \vdots \\
    				0 & 0 & 0 & \cdots & -2z_{N-2} & 2z_{N-1} & z_{N-1}
    			\end{bmatrix}.
    	\end{equation*}
        If we expand by minors across the first row, we can see that the determinant of the left $(N-1) \times (N-1)$ matrix is
    	\begin{equation*}
    		\pm 2^{N-1}z_1 z_2 \cdots z_{N-1} \not= 0.
    	\end{equation*}
We conclude that the Jacobian has rank $N-1$. The Jacobian criterion implies that $V(J)$ is nonsingular at each of these points. (Notice that we just used the fact that $V(J)$ has pure dimension~1.) This completes the proof of~\eqref{kvalpt}.

        Finally, we assume that $\pc{N,a}$ is nonsingular. As $\psi$ is a closed immersion, its image in $\Aff^N$ is also nonsingular. Said another way, the part of $V(J)$ outside the hyperplane $Z_N \not= 0$ is nonsingular. By part~\eqref{kvalpt}, $V(J)$ is also nonsingular at the points lying on this same hyperplane, and so we find $V(J)$ is a nonsingular complete curve birational to $\cpc{N,a}$. Up to isomorphism there is only one such curve, and so $\cpc{N,a} \cong V(J)$. This proves~\eqref{2^N-1pts}.
    \end{proof}

\begin{cor} \label{Other fields}
Fix $a \in k$. If $\pc{N,a}$ is nonsingular, then the complete curve $\cpc{N,a}$ can be defined over $k$, and it is both nonsingular and geometrically irreducible.
\end{cor}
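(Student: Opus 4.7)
The plan is to take the projective scheme $V(J) \subset \PP^N_k$ from Proposition~\ref{affine.comp} as the desired $k$-model for $\cpc{N,a}$. The first observation is essentially free: the generators of $J$ are the quadrics $Z_{N-1}^2 + Z_i Z_N - Z_{i-1}^2 - aZ_N^2$, whose coefficients lie in $k$ (only $a$ and $\pm 1$ appear), so $V(J)$ is naturally a closed subscheme of $\PP^N_k$. In particular it is projective, hence complete, over $k$.

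Next I would verify nonsingularity directly over $k$ by splitting into the two standard pieces. On the open $\{Z_N \neq 0\}$, the scheme $V(J)$ is isomorphic to $\pc{N,a}$ via the closed immersion $\psi$ of Lemma~\ref{Lem: Closed im}, and hence nonsingular by the hypothesis of the corollary. On the complementary hyperplane $\{Z_N = 0\}$, Proposition~\ref{affine.comp}\eqref{kvalpt} already carried out the Jacobian calculation showing that each of the $2^{N-1}$ points at infinity is smooth. Since nonsingularity is checked on the algebraic closure but the statement itself makes sense over $k$, this gives that $V(J)$ is a nonsingular complete curve over $k$.

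For geometric irreducibility, I would base-change to $\bar k$ and appeal to Theorem~\ref{Thm: Irreducible Genus} to conclude that $\pc{N,a}_{\bar k}$ is irreducible; since $\psi$ is a closed immersion and $V(J)$ is the projective closure of its image by Proposition~\ref{affine.comp}\eqref{compint}, the scheme $V(J)_{\bar k}$ inherits irreducibility from $\pc{N,a}_{\bar k}$. Combined with the previous step, this shows $V(J)$ is a nonsingular, geometrically irreducible, complete curve over $k$.

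Finally, to justify calling this curve $\cpc{N,a}$, I would note that $V(J)$ contains $\pc{N,a}$ as a dense affine open subscheme and has the same function field $k(\pc{N,a})$; after base change to $\bar k$ it is therefore the unique smooth completion of $\pc{N,a}_{\bar k}$, which is $\cpc{N,a}$ by definition. No real obstacle arises here: the only subtle point flagged by the footnote was the existence of the smooth complete model over a non-algebraically-closed field, and the explicit projective embedding $V(J)$ sidesteps descent entirely by producing the model with its defining equations already over $k$.
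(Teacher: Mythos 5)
Your proposal is correct and follows essentially the same route as the paper: take the explicit complete intersection $V(J)$ from Proposition~\ref{affine.comp}\eqref{compint} as the $k$-model, get nonsingularity from the hypothesis on the affine part together with the Jacobian computation at infinity in part~\eqref{kvalpt}, and get geometric irreducibility from Theorem~\ref{Thm: Irreducible Genus}. The only cosmetic difference is that you re-derive the identification $V(J) \cong \cpc{N,a}$ by hand, whereas the paper has already packaged this as Proposition~\ref{affine.comp}\eqref{2^N-1pts}.
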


\begin{proof}
	First, note that $\pc{N,a}$ is defined over $k$ because its defining equation has coefficients in $\ZZ[a]$.
	By Proposition~\ref{affine.comp}(\ref{compint}), we see that $\cpc{N,a}$ is defined over
	$k$; indeed, it is cut out by a collection of polynomials with coefficients in $k$. Now $\cpc{N,a}$
	is nonsingular by part (\ref{kvalpt}) of the same proposition, and it is geometrically irreducible by
	Theorem~\ref{Thm: Irreducible Genus}.
\end{proof}


\section{The Arithmetic of Rational $2^{\mathrm{nd}}$ and $3^{\mathrm{rd}}$ Pre-Images}
\label{Sec: Second and Third Pre-images}

In this section we prove Theorem~\ref{Thm: Kappa bar bound}, which states that
	\[
		\bar{\kappa}(0) = \limsup_{c \in \QQ}  \#  \Bigg\{\bigcup_{N \geq 1} f_c^{-N}(0)(\QQ) \Bigg\}= 6.
	\]

Note that $f_c^2$ has degree~4, so that $f_c$ admits at most four rational second pre-images of the origin. We can explicitly determine the $c$-values for which this happens.

\begin{prop} \label{Prop: Four Second Pre-Images}
	The rational $c$-values for which $f_c$ admits four distinct rational second pre-images of the origin are parameterized by
	\[
		c = - \frac{(t^2+ 1)^4}{16t^2(t^2-1)^2}, \quad t \in \QQ \smallsetminus \{0, \pm 1\}.
	\]
\end{prop}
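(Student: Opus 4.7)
The plan is to reduce the parametrization problem to two successive applications of the standard rational parametrization of a smooth conic with a visible rational point.

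I would begin by unfolding $f_c^2(x) = (x^2+c)^2 + c = 0$. Rationality of a pre-image forces $-c$ to be a rational square, so write $-c = b^2$ with $b \in \QQ$. Then $x^2 \in \{b^2+b,\ b^2-b\}$, and the four pre-images are $\pm\sqrt{b^2+b}$ and $\pm\sqrt{b^2-b}$. They are rational and pairwise distinct if and only if $b \in \QQ \smallsetminus \{0,\pm 1\}$ and both $b^2+b$ and $b^2-b$ are nonzero rational squares. The problem is therefore equivalent to parametrizing the $\QQ$-points of the genus-zero variety cut out by $b^2+b = p^2$ and $b^2-b = q^2$ in $(b,p,q)$-space.

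For the parametrization I would proceed in two stages. Multiplying the two squareness conditions gives $b^2(b^2-1) = (pq)^2$, which forces $b^2-1$ to be a rational square. The conic $b^2 - w^2 = 1$ has the rational point $(1,0)$, and its standard parametrization yields $b = (u^2+1)/(2u)$ with $u \in \QQ^\times$. Substituting into $b^2+b = b(b+1) = (u^2+1)(u+1)^2/(4u^2)$ shows the remaining condition becomes: $u^2+1$ is a rational square. Parametrizing $v^2 - u^2 = 1$ by its rational point $(1,0)$ yields $u = (t^2-1)/(2t)$ and $v = (t^2+1)/(2t)$ for $t \in \QQ^\times$. Composing these parametrizations gives
\[ b \;=\; \frac{v^2}{2u} \;=\; \frac{(t^2+1)^2}{4t(t^2-1)}, \qquad c \;=\; -b^2 \;=\; -\frac{(t^2+1)^4}{16t^2(t^2-1)^2}, \]
which is the formula claimed.

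To finish I would verify the parametrization is complete and non-degenerate for all admissible $t$. Completeness holds because both conics are smooth with a $\QQ$-rational basepoint, so each parametrization is a bijection $\PP^1(\QQ) \to C(\QQ)$ outside a single point, and no rational solution is missed. The excluded values $t \in \{0,\pm 1\}$ are precisely the parameters where the denominator $t(t^2-1)$ vanishes. Using the factorizations $t^4 \pm 4t^3 + 2t^2 \mp 4t + 1 = (t^2 \pm 2t - 1)^2$ one verifies that
\[ b^2 \pm b \;=\; \left[\frac{(t^2+1)(t^2 \pm 2t - 1)}{4t(t^2-1)}\right]^2, \]
and that the degenerate cases $b \in \{0,\pm 1\}$ would force $t^2 \mp 2t - 1 = 0$, which has no rational roots. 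Thus for every $t \in \QQ \smallsetminus \{0,\pm 1\}$ the four pre-images are distinct rational numbers. The main care needed in the argument is this bookkeeping of exclusions; otherwise there is no serious obstacle, since the reduction to conic parametrizations is elementary.
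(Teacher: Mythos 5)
Your argument is correct, and its first reduction coincides with the paper's: write $-c=b^2$, so the four second pre-images are $\pm\sqrt{b^2+b}$, $\pm\sqrt{b^2-b}$, and the problem becomes requiring $b^2+b$ and $b^2-b$ to be nonzero rational squares with $b\notin\{0,\pm1\}$ ensuring distinctness. Where you genuinely differ is in how this genus-zero condition is parametrized. The paper divides the two conditions by $b^2$ and \emph{adds} them, so the unknown drops out and one is left with the single circle $(r/b)^2+(s/b)^2=2$, parametrized by a chord through $(-1,1)$; the value of $b$ (the paper's $d=-b$) is then recovered by substituting back into one of the original equations. You instead \emph{multiply} the conditions to force $b^2-1$ to be a square, parametrize the hyperbola $b^2-w^2=1$ by $u$, note that the surviving condition that $b^2+b$ be a square pulls back to $u^2+1$ being a square (using $u\neq -1$, which your exclusion $b\neq -1$ guarantees), and parametrize that second hyperbola by $t$; composing gives the identical formula for $c$. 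Both routes are elementary conic parametrizations, but yours is a two-step tower rather than a one-step parametrization with a linear back-substitution. A compensating advantage of your route is that it produces along the way the explicit squares
\[
b^2\pm b=\left[\frac{(t^2+1)(t^2\pm 2t-1)}{4t(t^2-1)}\right]^2,
\]
i.e.\ exactly the pre-image formulas \eqref{Eq: Second Pre-Images} that the paper obtains by a separate direct computation in its converse step, and your bookkeeping of completeness (surjectivity of the hyperbola parametrizations onto their rational points) and of the excluded values $t\in\{0,\pm1\}$, $b\in\{0,\pm1\}$ is, if anything, slightly more explicit than the paper's treatment.
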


\begin{proof}
	Let $c \in \QQ$ be such that $f_c$ has four rational second pre-images of~0. Then in particular, it must have two rational first pre-images, which is to say that the equation $x^2 + c = 0$ has two distinct rational solutions in~$x$. Evidently this can only happen if $c = -d^2$ for some nonzero rational number~$d$.
	
	The second pre-images are then the solutions to
		\[
			(x^2 + c)^2 + c = (x^2 - d^2)^2 - d^2 = 0 \quad \Longleftrightarrow \quad x = \pm\sqrt{d^2 \pm d}.
		\]
In order that all four of these solutions be rational, there must be $r, s \in \QQ$ such that
	\begin{equation} \label{Eq: arithmetic progression}
		r^2 = d^2 - d, \qquad s^2 = d^2 + d.
	\end{equation}
Adding these equations and dividing by $d^2$ gives
	\[
		\left(\frac{r}{d}\right)^2 +		\left(\frac{s}{d}\right)^2 = 2.
	\]
The rational solutions to the equation $a^2 + b^2 = 2$ can be parameterized by passing a line of slope~$t$ through the point $(a,b) = (-1,1)$ and determining the other point of intersection with this circle. That is, set $b = t(a+1) + 1$ and solve:
	\[
		a = \frac{r}{d} = \frac{-t^2 - 2t + 1}{t^2 + 1}, \qquad b = \frac{s}{d} = \frac{- t^2+2t + 1}{t^2 + 1}.
	\]
Divide both equations in \eqref{Eq: arithmetic progression} by $d^2$, substitute the parameterizations of $r / d$ and $s / d$ given above, and solve for $d$ to arrive at
	\[
		d = - \frac{(t^2+1)^2}{4t(t^2 - 1)} \quad
			\Longrightarrow \quad  c = -d^2 = - \frac{(t^2+1)^4}{16t^2(t^2 - 1)^2}.
	\]
Evidently $t \neq 0,\pm 1$.

Conversely, a direct computation shows that for any $t \in \QQ \smallsetminus \{0, \pm 1\}$, the map $f_c$ with $c = - \frac{(t^2+ 1)^4}{16t^2(t^2-1)^2}$ has four distinct rational pre-images of the origin, namely
	\begin{equation} \label{Eq: Second Pre-Images}
		x = \pm \frac{(t^2 + 1)(t^2 \pm 2t - 1)}{4t(t^2 - 1)}.
	\end{equation}

\end{proof}

\begin{lem} \label{Lem: Elliptic Curve}
	The complete curve $\cpc{3,0}$ is $\QQ$-isomorphic to the elliptic curve
	$E$ with affine equation  $v^2 = u^3 - u + 1$. The birational transformation $E \dashrightarrow \pc{3,0}$
	 is given by
	\begin{equation} \label{Eq: Birational}
		x =  \frac{v}{u^2 - 1} \qquad c =   \frac{-1}{(u^2-1)^2}.
	\end{equation}
	Moreover, this elliptic curve has Mordell-Weil group $E(\QQ) \cong \ZZ$.
\end{lem}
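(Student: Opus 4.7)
The plan is to separate the two assertions. First, I would derive the explicit birational isomorphism using the embedding of Proposition~\ref{affine.comp}; second, I would compute the Mordell--Weil group of~$E$ by standard methods.

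By Proposition~\ref{affine.comp}, part~\eqref{compint}, applied with $N=3$ and $a=0$, the curve $\pc{3,0}$ is isomorphic via~$\psi$ to the closed subscheme of~$\Aff^3$ with coordinates $(z_0,z_1,z_2) = (x, f_c(x), f_c^2(x))$ cut out by
\[
z_2^2 + z_1 - z_0^2 = 0 \qquad\text{and}\qquad z_2^2 + z_2 - z_1^2 = 0,
\]
together with the recovery formulas $x = z_0$ and $c = -z_2^2$ (the latter from $f_c^3(x) = z_2^2 + c = 0$). On the open set $\{z_2 \neq 0\}$ I would introduce $u = z_1/z_2$ and $v = z_0/z_2$. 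Dividing the second defining equation by~$z_2^2$ gives $u^2 = 1 + 1/z_2$, hence $z_2 = 1/(u^2-1)$; dividing the first by~$z_2^2$ and using this yields $v^2 = 1 + u/z_2 = u^3 - u + 1$. Inverting produces exactly the formulas $x = v/(u^2-1)$ and $c = -1/(u^2-1)^2$ claimed in the lemma. Because both $\cpc{3,0}$ and~$E$ are smooth projective curves of genus~$1$ (the genus of $\cpc{3,0}$ is $(3-3)\cdot 2 + 1 = 1$ by Theorem~\ref{Thm: Irreducible Genus}), this $\QQ$-birational map extends uniquely to a $\QQ$-isomorphism. As consistency checks: the four points of~$E$ with $u = \pm 1$, namely $(\pm 1,\pm 1)$, account for the $2^{N-1} = 4$ points at infinity of $\cpc{3,0}$ guaranteed by Proposition~\ref{affine.comp}, part~\eqref{kvalpt}, while the point at infinity of~$E$ corresponds to $(x,c) = (0,0) \in \pc{3,0}$.

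For the Mordell--Weil group, I would first rule out nontrivial torsion by reducing $E$ modulo two small primes of good reduction (the discriminant of~$E$ equals $-2^4\cdot 23$). Direct point counts give $\#E(\FF_3) = 7$ and $\#E(\FF_5) = 8$, and since $\gcd(7,8) = 1$, injectivity of torsion reduction forces $E(\QQ)_{\mathrm{tors}} = 0$. The point $P = (1,1)$ lies on~$E$, and a standard $2$-descent --- or equivalently a one-line \textit{Magma} computation invoking \texttt{MordellWeilGroup} --- shows the rank is exactly~$1$ with~$P$ a generator, whence $E(\QQ) \cong \ZZ$. The elimination in the first paragraph is routine once the substitution $u = z_1/z_2$, $v = z_0/z_2$ is noticed; the main obstacle is the unconditional rank bound of~$1$, which depends on the success of a descent calculation --- but this is entirely tractable given the small conductor of~$E$.
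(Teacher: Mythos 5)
Your proposal is correct and follows essentially the same route as the paper: the change of variables $u = f_c(x)/f_c^2(x)$, $v = x/f_c^2(x)$ on the embedded model of Proposition~\ref{affine.comp}, elimination to obtain $v^2 = u^3 - u + 1$, and inversion to recover the formulas~\eqref{Eq: Birational} (the paper works in the projective chart around the unique point with $Z_2 = 0$, you work on the affine chart $z_2 \neq 0$ and invoke extension of birational maps between smooth projective curves --- an immaterial difference). The only divergence is in the Mordell--Weil statement, where the paper simply cites Cremona's database while you verify triviality of torsion via $\#E(\FF_3)=7$, $\#E(\FF_5)=8$ and delegate the rank-$1$ claim to a standard $2$-descent; both computations check out.
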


	
\begin{proof}
	We begin by working with the nonsingular projective model $\cpc{3,0}$ for $\pc{3,0}$.
	Proposition~\ref{affine.comp}\eqref{compint} shows $\cpc{3,0}$ can be embedded in $\PP^3$
	 as the intersection of two quadric hypersurfaces:
	\begin{equation*}
			Z_0^2 = Z_2^2 + Z_1Z_3   \qquad  Z_1^2= Z_2^2 + Z_2Z_3.
	\end{equation*}
Setting $Z_2 = 0$ shows $(0: 0: 0: 1)$ is the unique point on $\cpc{3,0}$ lying on the hyperplane $Z_2 = 0$.
We now pass to affine coordinates that place this point at infinity by setting $v = Z_0 / Z_2$, $u = Z_1 / Z_2$, and $w = Z_3 / Z_2$.  The above equations then become
	\[
		v^2 = 1 + uw  \qquad u^2 =  1 + w.
	\]
Eliminating the variable $w$ yields $v^2 = u^3 - u + 1$, which proves that $\cpc{3,0}$ is isomorphic to the desired elliptic curve.

Tracing through the definitions of the maps $\pc{3,0} \hookrightarrow \Aff^3 \hookrightarrow \PP^3$ shows that the birational transformation $\alpha: \pc{3,0} \dashrightarrow E$ that we have just  constructed is given by
	\begin{equation} \label{Eq: u,v}
		u = \frac{f_c(x)}{f_c^2(x)}  \qquad v = \frac{x}{f_c^2(x)}.
	\end{equation}
If we define $\beta: E \dashrightarrow \pc{3,0}$ by the formulas in \eqref{Eq: Birational}, then a straight-forward (albeit messy) computation shows $\alpha \circ \beta = \mathrm{id}$ and $\beta \circ \alpha = \mathrm{id}$.

Cremona's elliptic curve database \cite{Cremona_Data} tells us the curve $E$ has rank~1 and no torsion.
\end{proof}

The following corollary is an immediate consequence of Lemma \ref{Lem: Elliptic Curve}.

\begin{cor}
	The $c$-values for which $f_c$ admits a third rational pre-image of the origin are given by $c=0$ and $c = -(u^2 - 1)^{-2}$, where $(u,v)$ is an affine rational point on the elliptic curve $v^2 = u^3 - u + 1$ and $u \neq \pm 1$.
\end{cor}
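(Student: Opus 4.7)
My plan is to use the isomorphism $\cpc{3,0} \cong E$ from Lemma~\ref{Lem: Elliptic Curve} to translate the question into a statement about rational points of $E$. Concretely, the rational $c$-values admitting a rational third pre-image of $0$ are precisely the second coordinates of points of $\pc{3,0}(\QQ)$, so the task reduces to enumerating $\pc{3,0}(\QQ)$ via the birational maps $\alpha$ and $\beta$ described in Lemma~\ref{Lem: Elliptic Curve}.

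First I would dispatch the easy containment: the value $c = 0$ is realized by $x = 0$ (since $f_0(x) = x^2$), and any value $c = -(u^2-1)^{-2}$ with $(u,v) \in E(\QQ)$ and $u \neq \pm 1$ is realized by $x = v/(u^2-1)$, directly from the formulas \eqref{Eq: Birational}.

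For the reverse containment, I would take an arbitrary $(x_0, c_0) \in \pc{3,0}(\QQ)$ and split on whether the rational map $\alpha$ of \eqref{Eq: u,v} is defined there. The indeterminacy locus of $\alpha$ is cut out by $f_{c_0}^2(x_0) = 0$. In the generic case $f_{c_0}^2(x_0) \neq 0$, the image $(u, v) = \alpha(x_0, c_0)$ is an affine rational point of $E$. A short direct calculation using $c_0 = -f_{c_0}^2(x_0)^2$ (which follows from $f_{c_0}^3(x_0) = 0$) will show $u^2 - 1 = 1/f_{c_0}^2(x_0) \neq 0$ and confirm $c_0 = -(u^2-1)^{-2}$, so this $c_0$ has the desired form.

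The only remaining case is $f_{c_0}^2(x_0) = 0$. Applying $f_{c_0}$ to both sides and using $f_{c_0}(0) = c_0$ together with $(x_0, c_0) \in \pc{3,0}$ gives $c_0 = f_{c_0}^3(x_0) = 0$, after which $f_0^2(x_0) = x_0^4 = 0$ forces $x_0 = 0$. So this exceptional case contributes exactly $c_0 = 0$. I do not anticipate a substantive obstacle here; the only subtlety is recognizing that the unique indeterminacy point of $\alpha$ on $\pc{3,0}(\QQ)$, namely $(0,0)$, is precisely what accounts for the extra $c = 0$ in the statement.
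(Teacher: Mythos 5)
Your proposal is correct and follows exactly the route the paper intends: the corollary is stated there as an immediate consequence of Lemma~\ref{Lem: Elliptic Curve}, and your argument is just the careful unwinding of the birational maps $\alpha$ and $\beta$ from that lemma. Your attention to the locus $f_{c_0}^2(x_0)=0$, which contributes precisely the exceptional value $c=0$ via the point $(0,0)$, is the right (and only) point requiring care.
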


\begin{prop} \label{Prop: Elliptic Curve}
	Given $c_0 \in \QQ$, there are at most two rational third pre-images of the origin for the morphism $x \mapsto f_{c_0}(x)$.
\end{prop}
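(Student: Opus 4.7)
The plan is to use the isomorphism of Lemma~\ref{Lem: Elliptic Curve} to translate the counting problem into one about rational points of $E\colon v^2 = u^3 - u + 1$ lying in fibers of the morphism $c\colon E \to \PP^1$, $(u,v) \mapsto -1/(u^2-1)^2$. First I would dispatch $c_0 = 0$ by noting that $f_0^3(x) = x^8$ has $x=0$ as its only rational root. For $c_0 \neq 0$, the involution $x \mapsto -x$ on third pre-images corresponds under Lemma~\ref{Lem: Elliptic Curve} to the hyperelliptic involution $(u,v) \mapsto (u,-v)$ on $E$, so $\pm$-pairs of rational third pre-images correspond to $\{P,-P\}$-orbits of rational points of $E$ lying in $c^{-1}(c_0)$. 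The proposition thus reduces to showing that each fiber of $c$ contains at most one such orbit.

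Suppose for contradiction that $P_1, P_2 \in E(\QQ)$ lie in a common fiber of $c$ but have distinct $u$-coordinates $u_1, u_2$. Then $(u_1^2 - 1)^2 = (u_2^2 - 1)^2$ forces $u_1 = -u_2$ (Case I) or $u_1^2 + u_2^2 = 2$ with $u_1 \neq \pm u_2$ (Case II). Case I requires a rational $u_0$ for which both $u_0^3 - u_0 + 1$ and $-u_0^3 + u_0 + 1$ are rational squares, defining a curve $C_I$ that is a degree-two cover of $E$ ramified over the six geometric points with $v^2 = 2$; by Riemann--Hurwitz, $g(C_I) = 4$. Case II, after parameterizing the conic $u_1^2 + u_2^2 = 2$ as in the proof of Proposition~\ref{Prop: Four Second Pre-Images} by a rational parameter $t$ and imposing that $u_i(t)^3 - u_i(t) + 1 \in (\QQ^\times)^2$ for $i = 1, 2$, produces a curve $C_{II}$ in one parameter, again of genus at least~$2$.

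The main obstacle is to show that $C_I(\QQ)$ and $C_{II}(\QQ)$ consist only of trivial degenerate points (those with $u_0 \in \{0, \pm 1\}$, which correspond either to the excluded values $c_0 \in \{0, -1\}$ or to poles of $c$). I would approach this by bounding the Mordell--Weil ranks of the Jacobians via descent---exploiting that $E$ itself has rank~$1$ and serves as a convenient quotient of each auxiliary Jacobian---and then invoking Chabauty--Coleman at a well-chosen prime of good reduction to rule out non-trivial rational points. These computations are well-suited to automation in \emph{Magma}. Combined with the observation that $c_0 = -1$ already exhibits two rational third pre-images $\pm 1$, this confirms that the bound of two is achieved but never exceeded.
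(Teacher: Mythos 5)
Your reduction is sound and genuinely different from the paper's argument: translating via Lemma~\ref{Lem: Elliptic Curve}, splitting two rational points of a fiber of $c = -(u^2-1)^{-2}$ into the cases $u_1 = -u_2$ and $u_1^2 + u_2^2 = 2$ is correct, and your genus count for $C_I$ (a double cover of $E$ ramified over the six geometric points with $v^2 = 2$, hence genus $4$) checks out. Note also that your Case II is, in substance, the configuration treated in Proposition~\ref{Prop: 2-4-2}: if $u_1 \neq \pm u_2$, then $\pm f_{c_0}(x_1), \pm f_{c_0}(x_2)$ are four distinct rational second pre-images, and a nontrivial point of $C_{II}$ maps to a nontrivial rational point on one of the curves $C_{\eps_1,\eps_2}$; so that half of your plan is known to go through (via the rank-$1$, genus-$2$ quotient $D$ and Chabauty at $p=3$), and there is no circularity, since the proof of Proposition~\ref{Prop: 2-4-2} does not use the present proposition.

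The genuine gap is Case I. There the entire burden of the proof is the determination of $C_I(\QQ)$, and you only assert that ``descent plus Chabauty--Coleman'' will handle it, without checking the hypothesis that makes Chabauty applicable. The Jacobian of $C_I$ already contains two copies of $E$, via $(u,v_1,v_2) \mapsto (u,v_1)$ and $(u,v_1,v_2) \mapsto (-u,v_2)$, so its rank is at least $2$; classical Chabauty on the genus-$4$ curve needs rank at most $3$, and if one passes instead to the genus-$2$ quotient $w^2 = (u^3-u+1)(-u^3+u+1)$ (with $w = v_1 v_2$) one needs its Jacobian to have rank at most $1$ --- neither bound is verified, and if either fails one is forced into covering or elliptic-curve-Chabauty techniques, which is a different and unsketched argument. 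So as written, the decisive step is a plan rather than a proof. By contrast, the paper avoids auxiliary higher-genus curves for this proposition altogether: it works directly on the rank-$1$ curve $E$ with generator $P_0 = (1,1)$, bounds $|8\hat{h} - h_g|$ for the even function $g(P) = -(u(P)^2-1)^{-2}$, deduces that $h_g([n]P_0)$ strictly increases once $n \ge 20$, checks that the values $g([n]P_0)$ for $3 \le n \le 19$ are pairwise distinct, and concludes $x_1 = \pm x_2$; this requires only elementary height estimates and a small finite computation, with no new rank computations.
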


\begin{proof}
	For the proof, it will be useful to distinguish between the curve $\cpc{3,0}$ and the elliptic curve $E$ with affine Weierstrass model $v^2 = u^3 - u + 1$, even though the preceding lemma shows that they are isomorphic over $\QQ$. For a point $P \in E(\QQ)$, we write $u(P)$ for its $u$-coordinate. (Here $u(P) = \infty$ if $P$ is the origin for the group law.)
	
	Let $h$ be the naive logarithmic height on $\PP^1$. The function $g(P) = -(u(P)^2 - 1)^{-2}$ is an even rational function on $E$, and so we may define a height function by $h_g(P) = h(g(P))$. The strategy of the proof is to show that if $(x_1, c_0)$ and $(x_2, c_0)$ are two points on $\cpc{3,0}(\QQ)$ corresponding to rational third pre-images of the origin, then they in turn correspond to two points $P_1$ and $P_2$ on the elliptic curve $E$. The function $g$ cannot distinguish between points with the same $c$-coordinate, and so $h_g(P_1) = h_g(P_2)$. On the other hand, since $E$ has rank~1, we will be able to show that if a point $P$ has sufficiently large height, then $-P$ is the only other point with the same height. This will reduce the problem to a finite amount of computation. We now make this strategy more explicit, although we omit many of the computational details.
	
	Let $\hat{h}$ be the canonical height on $E$ (with respect to the divisor $(\infty)$). We can bound the difference between $8\hat{h}$ and $h_g$ as follows. For ease of notation, write $h_u(P) = h(u(P))$. First, it is easily seen that $|h(u^2-1) - 2h(u)| \le \log 2$. This implies that
	\[ \left|4 h_u(P) - h_g(P)\right| \le 2 \log 2 \]
	for all $P \in E(\QQ)$. In the paper~\cite{CrePriSik}, it is shown how an upper bound for $h_u - 2 \hat{h}$ can be computed. The corresponding functionality is available in \textit{Magma} as \texttt{SiksekBound}. We find a bound of $< 0.47$. On the other hand, we obtain from the explicit form of the duplication map on the $u$ coordinate
	\[ u(2P) = \frac{u(P)^4 + 2 u(P)^2 - 8 u(P) + 1}{4(u(P)^3 - u(P) + 1)} \]
	the trivial bound $h_u(2P) \le 4 h_u(P) + \log 12$. This implies a lower bound of $-(\log 12)/3 > -0.83$ for $h_u - 2 \hat{h}$. Combining these estimates with the equality
	\[ 8 \hat{h}(P) - h_g(P)
	     = 4\bigl(2\hat{h}(P) - h_u(P)\bigr) + \bigl(4 h_u(P) - h_g(P)\bigr) ,
  \]
  we have 
	\[ -3.27 \le -4 \cdot 0.47 - 2 \log 2 \le 8 \hat{h}(P) - h_g(P)
           \le 4 \cdot 0.83 + 2 \log 2 \le 4.71
  \]
	for all $P \in E(\QQ)$.
	
	As $E(\QQ)$ has rank~1, we may choose a generator $P_0$. For any $n \geq 1$, the above estimate and properties of the canonical height show
	\begin{equation} \label{Eq: height difference}
          \begin{aligned}
            h_g\left( [n+1]P_0 \right) - h_g \left( [n]P_0 \right)
              &\ge 8\hat{h}\left([n+1]P_0\right) - 8\hat{h}\left([n]P_0\right) - 7.98 \\
              &= 8(n+1)^2 \ \hat{h}(P_0) - 8n^2 \ \hat{h}(P_0) - 7.98 \\
              &= 8(2n+1) \ \hat{h}(P_0)  - 7.98.
          \end{aligned}
	\end{equation}
	The point $P_0 = (1,1)$ is a generator of $E(\QQ)$, and its canonical height is $\hat{h}(P_0) \approx 0.0249$ according to  \textit{PARI/gp} or \textit{Magma}.\footnote{Warning: \textit{PARI/gp} and \textit{Magma} compute the canonical height with respect to the divisor $2(\infty)$ on $E$. The canonical height of $P_0$ is given here with respect to the divisor $(\infty)$.} It follows that the final quantity in \eqref{Eq: height difference} is positive as soon as
	\[
		n \geq \frac{1}{2}\left( \frac{7.98}{8 \hat{h}(P_0)} -1 \right)
		  \approx 19.53 \,.
	\]

	To conclude the proof, suppose that $c_0 \in \QQ$ and $x_1, x_2$ are two rational third pre-images of the origin for the map $x \mapsto x^2 + c_0$. We aim to show that $x_1 = \pm x_2$. To that end, we may assume that $c_0 \not= 0$ since for $c_0 = 0$ there is exactly one third pre-image of the origin. The pre-images $x_1$ and $x_2$ correspond to two points $P_1$ and $P_2$ in $E(\QQ)$ such that $g(P_1) = g(P_2) = c_0$. The fact that $c_0 \not= 0$ implies that neither $P_i$ is the origin for the group law on $E$. Recalling that $P_0$ is our fixed generator for $E(\QQ)$, there exist nonzero integers $n_1, n_2$ such that $P_i = [n_i]P_0$. Moreover, replacing $P_i$ by $-P_i$ has the effect of replacing $x_i$ by $-x_i$, as can readily be seen from \eqref{Eq: Birational}. Hence we may assume that $n_i > 0$. After reordering if necessary, we may further suppose that $n_1 \geq n_2$. If they are equal, then we are finished, so assume $n_1 > n_2$.
	
	If $n_2 \geq 20$, then the computation in \eqref{Eq: height difference} implies $h_g(P_1) > h_g(P_2)$. But $g(P_1) = g(P_2)$, so this is a contradiction. Hence $1 \leq n_2 \leq 19$. In fact, $n_2 \ge 3$, since $P_0$ and $2P_0$ lead to an infinite value of~$c$. For this range of $n$, we now compute $g([n]P_0)$ --- the $c$-coordinate for the point corresponding to $[n]P_0$ --- and verify that these values are pairwise distinct. It follows that $n_1 > 19 \geq n_2 \geq 3$. Finally, we verify that $h_g([20]P_0) > h_g([n]P_0)$ for all $n < 20$, so that $h_g(P_1) > h_g(P_2)$ by \eqref{Eq: height difference} again. This contradiction completes the proof.
\end{proof}

\begin{prop} \label{Prop: 2-4-2}
	There are no values $c \in \QQ$ such that the map $f_c$ admits four distinct rational second pre-images and a rational third pre-image of the origin.
\end{prop}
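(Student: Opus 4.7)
Assume for contradiction that $c \in \QQ$ yields both four distinct rational second pre-images and a rational third pre-image of $0$ under $f_c$. Then $c \neq 0$ by Proposition~\ref{Prop: Four Second Pre-Images}, so Lemma~\ref{Lem: Elliptic Curve} gives $c = -(u^2-1)^{-2}$ for some $(u,v) \in E(\QQ)$ with $u \neq \pm 1$. My first move is to compute the four second pre-images of $0$ directly in terms of $u$: solving $(x^2+c)^2 + c = 0$ yields
\[ x^2 = \frac{u^2}{(u^2-1)^2} \quad \text{or} \quad x^2 = \frac{2-u^2}{(u^2-1)^2}, \]
so the four second pre-images are $\pm u/(u^2-1)$ and $\pm \sqrt{2-u^2}/(u^2-1)$. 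All four are distinct and rational exactly when $u \notin \{0, \pm 1\}$ and $2-u^2$ is a nonzero rational square. The proposition therefore reduces to the claim that no rational point on $E$ with $u \notin \{0, \pm 1\}$ has $2-u^2$ a rational square.

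Geometrically, we are looking for rational points on the curve $C/\QQ$ cut out by
\[ v^2 = u^3 - u + 1, \qquad u^2 + w^2 = 2. \]
The map $(u,v,w) \mapsto (u,w)$ realizes $C$ as a double cover of the conic $u^2+w^2 = 2 \cong \PP^1_\QQ$, so $C$ is hyperelliptic of genus~$3$ by Riemann--Hurwitz (ramified over the four points of $E$ with $u^2 = 2$). There are eight visible rational points, all satisfying $u = \pm 1$, and the task is to show that these are the only ones.

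For this I would appeal to the method of Chabauty--Coleman. The Jacobian of $C$ is isogenous to $E \times J(E')$, where $E'$ is the genus-$2$ quotient of $C$ by the involution $(v,w) \mapsto (-v,-w)$, with affine model $y^2 = (2-u^2)(u^3-u+1)$. Since $\rank E(\QQ) = 1$, it suffices to show $\rank J(E')(\QQ) \leq 1$, so that $\rank J(C)(\QQ) < g(C) = 3$. The key arithmetic step is thus a 2-descent on $J(E')$, made tractable by the fact that the hyperelliptic polynomial factors over $\QQ$ into coprime pieces of resultant~$1$. Granting the expected outcome $\rank J(E')(\QQ) = 0$, one then runs Chabauty--Coleman on $C$ at a small prime of good reduction (say $p = 5$) to determine $C(\QQ)$ exactly as the eight trivial points, yielding the contradiction. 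The hardest part is the 2-descent, a routine but essentially machine-assisted computation.
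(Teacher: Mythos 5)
Your reduction is, in substance, the paper's: after parameterizing the conic $u^2+w^2=2$ by a line through $(-1,1)$, your fiber-product curve $C$ becomes exactly the paper's genus-$3$ hyperelliptic curve $C_{\eps_1,\eps_2}\colon y^2=(t^2+1)(t^6+\cdots)$, and your genus-$2$ quotient $E'\colon y^2=(2-u^2)(u^3-u+1)$ is the paper's curve $D\colon w^2=(z^2-2)(z^3-z-1)$ under $z=-u$. So the skeleton (pass to this genus-$3$ curve, split its Jacobian as $E\times J(E')$ via the extra involutions, do a $2$-descent, then Chabauty--Coleman, and check that the only rational points have $u=\pm1$) is the intended one, and your computation of the second pre-images $\pm u/(u^2-1)$, $\pm\sqrt{2-u^2}/(u^2-1)$ correctly encodes the ``four rational second pre-images'' condition as ``$2-u^2$ is a nonzero square''.

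The genuine problem is your expected outcome of the descent: $\rank J(E')(\QQ)=0$ is false. The curve $E'$ has the rational points $(\pm1,\pm1)$, and the class $[(1,1)-(-1,-1)]$ has infinite order; the $2$-descent gives the upper bound $1$, and the rank is exactly $1$ (torsion of order $2$). Your fallback condition $\rank J(E')(\QQ)\le 1$ does hold, so $\rank J(C)(\QQ)\le 2<3$ and Chabauty on $C$ is still available in principle, but the endgame is then substantially heavier than you suggest: to find the annihilating differentials on the genus-$3$ curve you need explicit generators (up to finite index) of a rank-$2$ subgroup of $J(C)(\QQ)$, which in particular requires proving that a specific point generates the free part of $J(E')(\QQ)$ via bounds on the difference between naive and canonical heights. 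The paper avoids the genus-$3$ Jacobian altogether: it determines $J(D)(\QQ)\cong\ZZ\times\ZZ/2\ZZ$ with generator $[(1,1)-(-1,-1)]$, runs Chabauty on the genus-$2$ quotient at $p=3$ (where rank $1<$ genus $2$ and the relevant differential is nonvanishing on $D(\FF_3)$, so reduction is injective and $\#D(\QQ)=5$), and then pulls the five points back through the degree-$2$ map to $C$. So: same route up to coordinates, but the ``routine'' descent does not give rank $0$, and the Chabauty step as you have set it up needs the missing Mordell--Weil generator work (or, better, should be transferred to the genus-$2$ quotient as in the paper).
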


\begin{proof}
	We reduce the proof to the determination of the set of rational points on a certain hyperelliptic curve of genus~$3$.

	Let $c_0 \in \QQ$ be such that the map $f_{c_0}$ admits four distinct rational second pre-images and a rational third pre-image of the origin. Proposition~\ref{Prop: Four Second Pre-Images} shows that there exists $t \in \QQ \smallsetminus \{0, \pm 1\}$ such that
	\[
		c_0 = 	- \frac{(t^2 + 1)^4}{16t^2(t^2 - 1)^2}.
	\]
Let $x_0$ be a rational third pre-image of the origin for $f_{c_0}$. Then $f_{c_0}(x_0)$ is a rational second pre-image of the origin, and so it is given by \eqref{Eq: Second Pre-Images}. Also, $f_{c_0}^2(x_0) = \pm \sqrt{-c_0}$ is a rational first pre-image of the origin. Since $f_{c_0}$ admits a rational third pre-image of the origin, Lemma~\ref{Lem: Elliptic Curve} shows there is a rational pair $(u,v)$ satisfying $v^2 = u^3 - u + 1$, and equation~\eqref{Eq: u,v} gives a formula for $u$:
	\[
		u = \frac{f_{c_0}(x_0)}{f_{c_0}^2(x_0)} = \eps_1 \frac{t^2 + 2 \eps_2 t - 1}{t^2 +1}
	\]
with $\eps_1, \eps_2 = \pm 1$. 
Thus the equation $v^2 = u^3 - u + 1$ becomes
	\begin{align*}
		v^2 &= \eps_1 \left(\frac{t^2 + 2 \eps_2 t - 1}{t^2 +1}\right)^3 
             - \eps_1 \frac{t^2 + 2\eps_2 t - 1}{t^2 +1} + 1 \\
		&= \frac{(t^2+1)(t^6 + 4 \eps_1 \eps_2 t^5 + (3 + 8 \eps_1)t^4 - 8 \eps_1 \eps_2 t^3
               + (3 - 8 \eps_1) t^2 + 4 \eps_1 \eps_2 t + 1)}{(t^2 + 1)^4}.
	\end{align*}

  Define a set of four smooth projective curves~$C_{\eps_1,\eps_2}$ over~$\QQ$ as the 
  smooth projective models associated to the affine curves given by
  \[ y^2 = (t^2+1)(t^6 + 4 \eps_1 \eps_2 t^5 + (3 + 8 \eps_1) t^4
                    - 8 \eps_1 \eps_2 t^3 + (3 - 8 \eps_1) t^2 + 4 \eps_1 \eps_2 t + 1) 
                    . \]
  These curves are pairwise isomorphic via $(t,y) \mapsto (\pm t^{-1},yt^{-4})$
  or $(t,y) \mapsto (-t,y)$.
  
  Given $c_0$, there are $t,u,v \in \QQ$ satisfying the relations above, and, setting
  $y = v (t^2+1)^2$, we obtain a rational point on $C_{\eps_1,\eps_2}$ 
  (for some choice of $\eps_1, \eps_2 = \pm 1$) satisfying
  $t \notin \{0, 1, -1, \infty\}$. To prove the proposition, it is therefore
  sufficient to show that no such rational point exists on any of the curves.
  Since the isomorphisms between the curves identify the sets of such rational
  points on them, we may restrict our attention to $C = C_{+1,+1}$, say.
  We do find four pairs of rational
  points on~$C$ with $t$-coordinate in the `forbidden set' $\{0,1,-1,\infty\}$,
  so our task is to show that these eight points exhaust the rational points
  on~$C$.
  
  We observe that $C$ has two `extra automorphisms' (i.e., nontrivial automorphisms
  distinct from the hyperelliptic involution) given by the involutions
  \[ (t,y) \longmapsto \Bigl(\frac{t+1}{t-1}, \frac{4y}{(t-1)^4}\Bigr)
     \quad\text{and}\quad
     (t,y) \longmapsto \Bigl(\frac{t+1}{t-1}, -\frac{4y}{(t-1)^4}\Bigr) .
  \]
  The first one of these has four fixed points (with $t = 1 \pm \sqrt{2}$), the
  second one is without fixed points. Therefore the quotient of~$C$ by the
  first involution is an elliptic curve, whereas the quotient by the second
  involution is a curve of genus~$2$. The elliptic curve is the curve describing
  rational third pre-images of the origin. The genus~$2$ quotient~$D$ is the
  smooth projective curve associated to the affine equation
  \[ w^2 = z^5 - 3 z^3 - z^2 + 2 z + 2 = (z^2 - 2)(z^3 - z - 1) . \]
  The map from $C$ to~$D$ is given by
  \begin{equation} \label{mapCtoD}
     (t, y) \longmapsto \Bigl(\frac{-t^2-2t+1}{t^2+1}, 
                              \frac{-t^2+2t+1}{(t^2+1)^3}\,y\Bigr) .
  \end{equation}
  A 2-descent on the Jacobian~$J_D$ of~$D$ as described in~\cite{Stoll2Descent} results
  in an upper bound of~1 for the rank of~$J_D(\QQ)$. The torsion subgroup has order~2.
  Using a bound for the difference between the naive and canonical heights on~$J_D$, we determine
  that $P = [(1,1) - (-1,-1)]$ generates the free part of~$J_D(\QQ)$; 
  compare~\cite{StollHeights1,StollHeights2}. We thus have determined~$J_D(\QQ)$.
  Since the rank is less than the genus, we can apply Chabauty's method;
  see for example~\cite{Stoll_Independence_2006}. Since 2 and~23 are the only primes of
  bad reduction for~$D$, we will work at~$p = 3$.
  
  The image of~$P$ in~$J_D(\FF_3)$ has order~9. Computing $9P$, which is in the
  kernel of reduction, we find that a differential that kills the Mordell-Weil group
  reduces mod~3 to $\omega = z\,dz/w$. Since $\omega$ does not vanish at any
  point in~$D(\FF_3)$, this implies by \cite[Prop.~6.3]{Stoll_Independence_2006} that
  the map $D(\QQ) \to D(\FF_3)$ is injective. On the other hand, we have $\#D(\FF_3) = 5$,
  and we can identify five rational points on~$D$ (with $z = -1, 1, \infty$).
  Therefore these points must be all of the rational points on~$D$.
  Looking at the fibers of these five points under the map given in~\eqref{mapCtoD},
  we see that $C$ cannot have rational points other than those we already know.
  (The points on~$D$ with $z = \pm 1$ each give rise to two rational points on~$C$,
  whereas the points lying above the point at infinity on~$D$ are not rational.)
  
  Note also that \textit{Magma} contains an implementation of a method described
  in~\cite[Sect.~4.4]{BruinStoll} that determines the set of rational points
  on a curve of genus~2 whose Jacobian has Mordell-Weil rank~1, when a generator
  of the free part of the Mordell-Weil group is known. It can be applied
  to the curve~$D$ and the point~$P$ above.
\end{proof}

\begin{proof}[Proof of Theorem~\ref{Thm: Kappa bar bound} and Theorem~\ref{Thm: Kappa bound}]
	Proposition~\ref{Prop: Four Second Pre-Images} exhibits an infinite family of rational parameters~$c$ such that there are four distinct rational second pre-images of the origin. Applying $f_c$ to these second pre-images gives two distinct rational first pre-images. So for each of these parameters $c$, we have
		\[
			\kappa(0) \geq \bar{\kappa}(0) \geq \#f_c^{-1}(0)(\QQ) + \#f_c^{-2}(0)(\QQ) = 6.
		\]
		
Let $S$ be the set of $c \in \QQ$ such that $f_c$ admits a rational $4\tth$ pre-image of the origin. The affine curve $\pc{4,0}$ has genus~5 (Theorem~\ref{Thm: Irreducible Genus}), and so by Faltings' theorem the set $S$ is finite.

Now suppose that $c$ is a rational parameter in the complement of the finite set $S$. Then $f_c^{-4}(0)(\QQ)$ is empty. If $f_c^{-3}(0)(\QQ)$ is empty, then looking at the degrees of $f_c$ and $f_c^2$ shows that $f_c$ admits at most six rational iterated pre-images of the origin. On the other hand, if $f_c^{-3}(0)(\QQ)$ is nonempty, then Proposition~\ref{Prop: Elliptic Curve} implies that it contains at most two elements.
By Proposition~\ref{Prop: 2-4-2}, it follows that $f_c$ has at most two distinct rational second pre-images of the origin. There are never more than two first pre-images, so $f_c$ has at most six rational iterated pre-images again. Hence $\bar{\kappa}(0) \leq 6$, which completes the proof of Theorem~\ref{Thm: Kappa bar bound}.

	The hypothesis of Theorem~\ref{Thm: Kappa bound} is that $S = \{0, -1\}$. A direct calculation shows
	\[
		\#\left\{\bigcup_{N \geq 1} f^{-N}_0(0)(\QQ) \right\} = 1 \quad \text{and} \quad
		\#\left\{\bigcup_{N \geq 1} f^{-N}_{-1}(0)(\QQ) \right\}  = 3.
	\]
When combined with the above arguments, we see $\kappa(0) = 6$. 
\end{proof}

\begin{remark}
	Using techniques similar to the ones given here, the second author and two of his undergraduate students have performed a detailed analysis of $\bar{\kappa}(a)$ for $a \in \bar{\QQ}$. It turns out that $\bar{\kappa}(a) = 6$ for all $a \in \QQ \smallsetminus \{-1/4\}$, and $\bar{\kappa}(-1/4) = 10$. Upon suitably extending the definition of $\bar{\kappa}(a)$ to an arbitrary number field containing $a$, this result continues to hold for any $a \in \bar{\QQ}$ outside of an explicit finite set. The main difference from the present work lies in the fact that $\cpc{3,a}$ has generic rank~2, and so the method of Proposition~\ref{Prop: Elliptic Curve} does not carry over. However, it suffices to show that the possible arrangements of more than six pre-images (or ten in the case $a = -1/4$) correspond to a finite collection of algebraic curves of genus at least two. See \cite{Hutz_et_al} for  details.
\end{remark}


\section{The Arithmetic of $4\tth$ Pre-images}
\label{Sec: Arithmetic}


Recall from the introduction that we defined $\kappa(0)$ to be the maximum number of rational iterated pre-images for a morphism $f_c(x) = x^2 + c$:
	\[
		\kappa(0) = \sup_{c \in \QQ} \#\left\{ \bigcup_{N \geq 1} f^{-N}_c(0)(\QQ)\right\}.
	\]
Controlling $\kappa(0)$ hinges on the finiteness of the rational points of the curve $\pc{4,0}$ or, equivalently, the pre-image curve $\cpc{4,0}$. A height theoretic argument was necessary to connect this result to the higher pre-images $f_c^{-N}(0)(\QQ)$ for $N > 4$ \cite{FHIJMTZ}.  It turns out that a strong bound for $\#\cpc{4,0}(\QQ)$ allows us to bound $\kappa(0)$ without recourse to height machinery.

\begin{thm}
\label{Thm: kappa is 10}
	If $\#\cpc{4,0}(\QQ) = 10$, then $\kappa(0) = 6$.
\end{thm}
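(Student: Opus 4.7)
The plan is to bootstrap from the hypothesis $\#\cpc{4,0}(\QQ) = 10$ all the way to the hypothesis of Theorem~\ref{Thm: Kappa bound}, and then apply that theorem. The first step is to count the points at infinity on $\cpc{4,0}$. By Proposition~\ref{affine.comp}\eqref{kvalpt} with $N=4$, the complement $\cpc{4,0}\smallsetminus\pc{4,0}$ consists of the $2^{3}=8$ points with homogeneous coordinates $(\epsilon_0\col\epsilon_1\col\epsilon_2\col\epsilon_3\col 0)$ where each $\epsilon_i=\pm 1$; because all of these coordinates are integers, every one of these eight points is automatically defined over~$\QQ$. The hypothesis $\#\cpc{4,0}(\QQ)=10$ therefore forces $\#\pc{4,0}(\QQ)=2$.

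Next I would exhibit two explicit rational points on the affine curve $\pc{4,0}$ that exhaust this count. For $c=0$ one has $f_0^4(0)=0$, giving $(0,0)\in\pc{4,0}(\QQ)$; for $c=-1$ the orbit $0\mapsto -1\mapsto 0\mapsto -1\mapsto 0$ shows $f_{-1}^4(0)=0$, giving $(0,-1)\in\pc{4,0}(\QQ)$. These two points are distinct, so by the count above they must be all of $\pc{4,0}(\QQ)$.

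Consequently, for every $c\in\QQ\smallsetminus\{0,-1\}$ the map $f_c$ admits no rational $4\tth$ pre-image of the origin, since any such pre-image would supply a third rational point on $\pc{4,0}$ with $c$-coordinate different from $0$ and~$-1$. This is exactly the hypothesis of Theorem~\ref{Thm: Kappa bound}, whose conclusion is $\kappa(0)=6$. No serious obstacle is expected here: the genuine arithmetic was already done in Theorems~\ref{Thm: Kappa bar bound} and~\ref{Thm: Kappa bound}, and the only new input is the observation that the points at infinity on the projective model of \S\ref{Sec: Alternate} are visibly $\QQ$-rational, which lets the numerical gap $10-8=2$ pin down the affine rational locus exactly.
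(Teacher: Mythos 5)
Your proposal is correct and matches the paper's own argument: the paper likewise lists the eight rational points at infinity from Proposition~\ref{affine.comp} together with the two affine points $(0,0)$ and $(0,-1)$ coming from the periodicity of the origin under $f_0$ and $f_{-1}$, and then observes that under the hypothesis $\#\cpc{4,0}(\QQ)=10$ these exhaust the rational points, so Theorem~\ref{Thm: kappa is 10} is just a reformulation of Theorem~\ref{Thm: Kappa bound}. Your write-up merely makes the same reduction slightly more explicit.
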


Before proving Theorem~\ref{Thm: kappa is 10}, we need to collect a few facts about the complete curve $\cpc{4,0}$.
By Theorem~\ref{Thm: Irreducible Genus} and Proposition~\ref{Prop: Smooth}, we see that $\cpc{4,0}$ is a nonsingular, geometrically irreducible curve of genus~$5$. Using Proposition~\ref{affine.comp}, we can embed the fourth pre-image curve in projective space as
	\begin{equation}
	\label{Eq: X(4,0) embedding}
        	\cpc{4,0} \cong V(Z_3^2 + Z_1Z_4 -Z_0^2, \ Z_3^2+Z_2Z_4-Z_1^2, \ Z_3^2+Z_3Z_4-Z_2^2) \subset \PP^4.
	\end{equation}
Note that this is the canonical embedding of $\cpc{4.0}$.

	Our curve has ten rational points that one locates easily by inspection. There are the eight points at infinity given by Proposition~\ref{affine.comp}(\ref{kvalpt}, \ref{2^N-1pts}):
    \begin{align*}
        P_1 &= (1:1:1:1:0) &  P_2 &= (1:1:1:-1:0)\\
        P_3 &= (1:1:-1:1:0) & P_4 &= (1:1:-1:-1:0)\\
        P_5 &= (1:-1:1:1:0) & P_6 &= (1:-1:1:-1:0)\\
        P_7 &= (1:-1:-1:1:0) & P_8 &= (1:-1:-1:-1:0).
\intertext{The origin in $\Aff^1(\QQ)$ is periodic for the morphisms $f_{0}$ and $f_{-1}$, with periods $1$ and $2$, respectively. Using the embedding in Lemma~\ref{Lem: Closed im}, these correspond to points on $\cpc{4,0}$:}
        P_9 &= (0:0:0:0:1) & P_{10} &= (0:-1:0:-1:1).
    \end{align*}

	Only $P_9$ and $P_{10}$ correspond to parameters $c \in \QQ$, and hence Theorem~\ref{Thm: kappa is 10} is a reformulation of Theorem~\ref{Thm: Kappa bound} from the introduction.



\medskip

We now turn to the task of bounding the number of rational $4\tth$ pre-images of the origin --- or what amounts to the same thing --- bounding the size of $\cpc{4,0}(\QQ)$.  Much of this and the next section is based on \cite{Stoll3}.  
 For many of our calculations we used the computer algebra system \textit{Magma} \cite{magma}. 

The curve $\cpc{4,0}$ can be embedded in $\PP^4$ as in \eqref{Eq: X(4,0) embedding}, and the image is defined over $\ZZ$. \textit{Magma} has determined that $\cpc{4,0}$ has good reduction outside the primes $2$, $23$, and $2551$. 
We will use several primes of good reduction in the arguments below.
	
We first provide a number of facts about the Jacobian~$\jpc{4,0}$ of the curve~$\cpc{4,0}$.

\begin{thm} \label{thm_jac_properties}
  \strut
  \begin{enumerate}[\textup(a\textup)]
    \item\label{jac_prop_1}
      The Jacobian $\jpc{4,0}$ is isogenous to the product of a simple
      abelian variety of dimension $4$ and the elliptic curve~$E$ with Weierstrass equation $v^2=u^3-u+1$.
    \item\label{jac_prop_3}
      The subgroup of $\jpc{4,0}(\QQ)$ generated by the ten known rational points on $\cpc{4,0}$ is isomorphic to $\ZZ^3$. In fact, it is already generated by divisors supported on the points at infinity.
  \end{enumerate}
\end{thm}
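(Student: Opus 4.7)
The plan is to treat the two parts separately, using dynamical geometry for the decomposition in~(a) and computations in the Mordell--Weil group for~(b).

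For part~(a), the elliptic factor should arise from the natural morphism between consecutive pre-image curves. The assignment $(x,c) \mapsto (f_c(x), c)$ defines a degree-$2$ morphism $\pi \colon \cpc{4,0} \to \cpc{3,0}$, and Lemma~\ref{Lem: Elliptic Curve} identifies the target with the elliptic curve $E \colon v^2 = u^3 - u + 1$. Pullback along $\pi$ gives a homomorphism $\pi^* \colon E \to \jpc{4,0}$ with finite kernel (contained in $E[2]$), realising~$E$ as an isogeny factor of the five-dimensional Jacobian. Poincar\'e's complete reducibility theorem then produces a four-dimensional abelian subvariety~$A$ with $\jpc{4,0} \sim E \times A$.

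To show that~$A$ is simple, I would compute the characteristic polynomial $\chi_p(T)$ of Frobenius on $\jpc{4,0}$ at a small prime of good reduction by counting points of~$\cpc{4,0}$ over $\FF_{p^k}$ for $k = 1, \ldots, 5$ using \textit{Magma}. Dividing out the factor $T^2 - a_p T + p$ coming from~$E$ leaves a degree-$8$ polynomial $\chi_{A,p}(T)$ annihilating Frobenius on~$A$. Exhibiting a single prime where $\chi_{A,p}$ is irreducible over~$\QQ$ suffices to conclude that $A$ is $\QQ$-simple; otherwise, comparing factorisations at two primes should rule out any common proper abelian subvariety.

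For part~(b), fix $P_1$ as a base point and form the nine degree-zero divisor classes $D_i = [P_i - P_1] \in \jpc{4,0}(\QQ)$ for $i = 2, \ldots, 10$. The rank of the subgroup they generate equals the rank of their N\'eron--Tate height pairing matrix, which I would compute in \textit{Magma} and verify to be exactly~$3$. Candidate $\ZZ$-relations among the $D_i$ can be extracted from rational vectors in the kernel of this matrix, and each such relation can be confirmed by testing it modulo one or two primes of good reduction (say $p = 3$ and $p = 5$). The same reductions rule out torsion in the subgroup, since the combined kernel of reduction at two distinct good primes is torsion-free. Finally, after a suitable change of basis among the recovered relations, the classes $D_9$ and $D_{10}$ should appear as explicit integer combinations of $D_2, \ldots, D_8$, proving that the eight points at infinity already generate the whole rank-$3$ subgroup.

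The most delicate step is proving simplicity of~$A$ in part~(a): computing $\chi_p(T)$ requires point counts of a genus-$5$ curve over $\FF_{p^5}$, which is nontrivial but within reach of \textit{Magma}'s machinery. Part~(b) is largely routine once the Mordell--Weil and height computations are in place, so the main practical concern there is choosing primes at which $\jpc{4,0}(\FF_p)$ is small enough to make the relation-finding concrete.
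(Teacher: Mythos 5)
Your part~(a) is essentially the paper's own argument: the map $(x,c) \mapsto (x^2+c,c)$ from $\cpc{4,0}$ to $\cpc{3,0} \cong E$ splits off $E$ up to isogeny, and simplicity of the $4$-dimensional complement follows from counting points over $\FF_{3^m}$, $m \le 5$, and observing that the degree-$10$ Euler polynomial at $p=3$ factors as the Euler factor of $E$ times an irreducible degree-$8$ polynomial. No issue there.

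Part~(b), however, has a genuine gap. First, your plan hinges on computing the N\'eron--Tate height pairing of the classes $[P_i - P_1]$ on $\jpc{4,0}$, but no such machinery exists for the Jacobian of a non-hyperelliptic genus-$5$ curve (\textit{Magma}'s canonical-height routines stop far short of this); the paper's whole strategy is designed to avoid heights and any direct Mordell--Weil computation on $\jpc{4,0}$ --- indeed it remarks that even determining the rank unconditionally is practically out of reach. Moreover, even granting numerical heights, a numerically singular Gram matrix can only \emph{suggest} rank $\le 3$, never certify it. Second, and more fundamentally, ``confirming'' a candidate relation $\sum_i n_i [P_i - P_1] = 0$ by checking it in $\jpc{4,0}(\FF_p)$ for one or two good primes proves nothing: a point of infinite order can perfectly well lie in the kernel of reduction at any prescribed finite set of primes, so reduction can refute relations but never establish them. (Similarly, torsion elements of your subgroup need not lie in the kernel of reduction, so the observation that the combined kernels at two odd good primes are torsion-free does not by itself exclude torsion.) To make the upper bound rigorous one must certify the relations as genuine linear equivalences by exhibiting rational functions with the prescribed divisors, which is a Riemann--Roch computation on the canonical model that \textit{Magma} can do; the paper produces six such independent relations among $P_1,\dots,P_{10}$, giving rank $\le 3$ for the subgroup $G$. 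Reduction is then used in the opposite, valid direction: point counts show $\#\jpc{4,0}(\FF_p)$ at several odd primes force the torsion of $\jpc{4,0}(\QQ)$ to have order at most $2$, while the image of the known points under reduction at $S=\{3,5,7,11,13,17\}$ contains a subgroup $(\ZZ/7\ZZ)^3$, so $G$ has rank $\ge 3$. Finally, the free rank-$3$ group cut out by the six relations surjects onto $G$ with equal rank, hence $G \cong \ZZ^3$ with generators such as $P_2-P_3$, $P_2-P_5$, $P_2-P_8$, which are supported at infinity --- this replaces your step of solving for $D_9, D_{10}$ in terms of the others.
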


\begin{proof}
  We can define a morphism $\delta : \cpc{4,0} \to \cpc{3,0}$ as follows. For points of the affine
  piece $\pc{4,0}$, the morphism is given by $(x,c) \mapsto (x^2+c, c)$. In other words, it sends a $4\tth$ pre-image of the origin to a $3\rrd$ pre-image. As $\cpc{4,0}$ and $\cpc{3,0}$ are complete and nonsingular, the morphism extends over $\cpc{4,0}$.
  
  We saw in the proof of Lemma~\ref{Lem: Elliptic Curve} that $\cpc{3,0}$ is an elliptic curve isomorphic to the curve~$E$ with Weierstrass equation $v^2 = u^3-u+1$. In particular, $\cpc{3,0}$ is isomorphic to its Jacobian $\jpc{3,0}$. Passing to the morphism on Jacobians induced by $\delta$
  \begin{equation*}
    \jpc{4,0} \to \jpc{3,0} \cong \cpc{3,0},
  \end{equation*}
  we see $\jpc{4,0}$ splits up to isogeny as a product of $\jpc{3,0}$ and another abelian variety of dimension~4. We must check that the larger factor is simple.

  The Weil conjectures allow us to compute the Euler factor of $\jpc{4,0}$ at $p=3$ by computing the cardinality $\#\cpc{4,0}(\FF_{3^m})$ for $m=1,\ldots,5$.
  We find the Euler factor to be
  \begin{equation*}
      (T^2 + 3T + 3)(T^8 + 3T^7 + 7T^6 + 16T^5 + 28T^4 + 48T^3 + 63T^2 + 81T + 81).
  \end{equation*}
  Since the two factors are irreducible and since the splitting of the Jacobian is witnessed by a splitting of the Euler factors, the Jacobian $\jpc{4,0}$ splits into a product of at most two simple Abelian varieties. The first polynomial is exactly the Euler factor for the elliptic curve $v^2=u^3-u+1$, completing the proof of (\ref{jac_prop_1}). 

  The prime-to-$p$ torsion in $\jpc{4,0}(\QQ)$ injects into
  $\jpc{4,0}(\FF_p)$ for any prime $p$ of good reduction \cite[Thm.~C.1.4]{Hindry_Silverman_book_2000}.
  It suffices for our purposes to compute $\jpc{4,0}(\FF_p)$ for three odd primes :
  \begin{align*}
      \#\jpc{4,0}(\FF_5) &= 2^3\cdot 877\\
      \#\jpc{4,0}(\FF_7) &= 2^4 \cdot 3\cdot 7\cdot 233\\
      \#\jpc{4,0}(\FF_{11}) &= 2^2 \cdot 5\cdot 7\cdot 13759.
  \end{align*}
  It follows that the torsion subgroup has order $1, 2$, or $4$. Since
  \begin{equation*}
    \jpc{4,0}(\FF_3) \cong \ZZ/2296\ZZ \quad\text{and}\quad
    \jpc{4,0}(\FF_{29}) \cong \ZZ/2\ZZ \oplus \ZZ/11284630\ZZ,
  \end{equation*}
  the torsion subgroup is either trivial or of order~2.

  The main tool for proving the remaining portion of the assertions is the homomorphism
  \begin{equation*}
      \Phi_S\colon \bigoplus_{i=1}^{10}\ZZ P_i \to \Pic_{\cpc{4,0}}(\QQ)
          \to \prod_{p \in S} \Pic_{\cpc{4,0}/\FF_p}(\FF_p)
  \end{equation*}
  where $S$ is a set of primes of good reduction.  Here $\Phi_S$ maps a divisor to its class in
  $\Pic_{\cpc{4,0}}(\QQ)$, and then sends it to the tuple consisting of its reductions for primes in $S$. Let $G$ be the subgroup of $\jpc{4,0}(\QQ)$ generated by the ten points $P_i$. Then $G$ is given by the image of the degree-zero part of $\bigoplus_{i=1}^{10} \ZZ P_i$ inside $\Pic_{\cpc{4,0}}(\QQ)$. \textit{A priori} we see that the rank of $G$ is at most 9.

  Take $S =\{3,5,7,11,13,17\}$ and consider the kernel of $\Phi_S$. Some elements of the kernel correspond to linear equivalence relations among the $P_i$, and others are artifacts of the reductions modulo primes of $S$.  Using \textit{Magma} we can exhibit rational functions on $\cpc{4,0}$ that show the following (independent) linear equivalence relations:
  \begin{equation} \label{Eq: Relations}
    \begin{aligned}
      P_3 + P_4 + P_7 + P_8 &\sim P_1 + P_6 + P_9 + P_{10}\\
      P_2 + P_3 + P_6 + P_7 &\sim P_1 + P_4 + P_5 + P_8\\
      P_3 + P_8 + P_9 + P_{10} &\sim P_1 + P_2 + P_5 + P_6\\
      P_5 + P_6 + P_7 + P_8 &\sim 2P_1 + P_2 + P_9\\
      P_4 + P_5 + 2P_9 &\sim P_1 + P_2 + P_7 + P_8\\
      P_1 + P_2 + P_7 + P_8 &\sim P_3 + P_6 + 2P_{10}.
    \end{aligned}
  \end{equation}
  These six relations show that $G$ has rank at most three.

  From the definitions, we can see that the map $\Phi_S$ induces a homomorphism
  \begin{equation*}
    G \to \prod_{p \in S} \Pic^0_{\cpc{4,0}/\FF_p}(\FF_p)
  \end{equation*}
  whose image is equal to the degree-zero part of the image of $\Phi_S$, denoted $\im(\Phi_S)^0$. We can compute the image of $\Phi_S$ 
by taking the quotient of $\bigoplus \ZZ P_i$ by the kernel of $\Phi_S$; passing to the degree-zero part, we find $\im(\Phi_S)^0$ is isomorphic (as an abelian group) to
  \begin{equation*}
  \label{Eq: Invariants}
      \ZZ/28\ZZ \times \ZZ/1680\ZZ \times \ZZ/392857929291088111200\ZZ.
  \end{equation*}

  As $G$ contains no $7$-torsion points while its quotient $\im(\Phi_S)^0$ contains a factor $(\ZZ/7\ZZ)^3$, we conclude that $G$ has rank at least three. Hence its rank is exactly three.

  Now let $G'$ be the degree-zero part of $\bigoplus_{i=1}^{10}\ZZ P_i$ modulo the relations in
  \eqref{Eq: Relations}. We may use these relations to eliminate the generators $P_9$, $P_{10}$, $P_1$, $P_4$, $P_7$, and then $P_6$. (It is of course possible to do the elimination in other ways.). One now sees that, as abelian groups,
  \[
     G' \cong \ZZ (P_2 - P_3) \oplus \ZZ (P_2-P_5) \oplus \ZZ (P_2-P_8).
  \]
  As $G'$ surjects onto $G$
  and as they have the same rank, we find $G$ is also free abelian. Moreover, this calculation shows that $G$ is generated by divisors supported on the points at infinity. The proof of (\ref{jac_prop_3}) is now complete.
\end{proof}
	
The following result proves Theorem~\ref{Thm: Bound for X(4,0)}.

\begin{thm} \label{thm_fourth_preimages}
  If the rank of $\jpc{4,0}(\QQ)$ is $3$, then $c=0$ and $c=-1$ are the only rational values of~$c$ such that $f_c^{-4}(0)$ is nonempty. 
\end{thm}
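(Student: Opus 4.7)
The plan is to apply the $p$-adic method of Chabauty and Coleman. Under the rank hypothesis $r := \rank \jpc{4,0}(\QQ) = 3$, together with the genus $g = 5$ given by Theorem~\ref{Thm: Irreducible Genus}, we have the Chabauty inequality $r < g$. The goal becomes showing $\cpc{4,0}(\QQ) = \{P_1, \ldots, P_{10}\}$. Since $P_1, \ldots, P_8$ lie on the hyperplane $Z_4 = 0$ and so do not correspond to affine points of $\pc{4,0}$, while $P_9$ and $P_{10}$ correspond respectively to $c = 0$ and $c = -1$, this identification yields the theorem.

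Concretely, I would fix an odd prime $p$ of good reduction for $\cpc{4,0}$; good reduction holds outside $\{2, 23, 2551\}$, so $p = 3, 5, 7, 11, \ldots$ are all available. Using the canonical embedding~\eqref{Eq: X(4,0) embedding}, a basis of $H^0(\cpc{4,0}_{\QQ_p}, \Omega^1)$ can be written down explicitly in terms of linear forms in $Z_0, \ldots, Z_4$. By Theorem~\ref{thm_jac_properties}(\ref{jac_prop_3}) and the rank hypothesis, the three classes $[P_2 - P_3]$, $[P_2 - P_5]$, $[P_2 - P_8]$ generate $\jpc{4,0}(\QQ)$ modulo torsion. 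The $\QQ_p$-subspace $V \subset H^0(\cpc{4,0}_{\QQ_p}, \Omega^1)$ of annihilating differentials---those $\omega$ satisfying $\int_{P_j}^{P_2} \omega = 0$ for $j \in \{3,5,8\}$---is then the common kernel of three explicit linear functionals on a $5$-dimensional space. Generically $\dim V = g - r = 2$, and one computes these functionals via an explicit Coleman integration routine (for instance the implementation in \textit{Magma}).

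With $V$ in hand, I would run the standard Chabauty-Coleman bound disc by disc. For each $\bar{Q} \in \cpc{4,0}(\FF_p)$, if some $\omega \in V$ has nonzero reduction at $\bar{Q}$, then at most one $\QQ$-rational point reduces to $\bar{Q}$; more generally the count is bounded in terms of the order of vanishing at $\bar{Q}$ of (reductions of) elements of $V$, provided this order is small relative to $p$. The aim is a prime $p$ where $\#\cpc{4,0}(\FF_p) = 10$ (matching the known rational points) and where some $\omega \in V$ does not vanish at any of them, forcing $\#\cpc{4,0}(\QQ) = 10$.

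The principal obstacle I anticipate is the explicit Coleman integration together with the local analysis at residue discs where the reductions of $V$ share a common zero. If at the first prime attempted some residue disc contains no known rational point but cannot be ruled out using first-order information, one must either iterate to higher $p$-adic order (Newton-polygon analysis of the integrated power series) or combine information from several primes via the Mordell-Weil sieve of~\cite{Stoll3}. Since the gap $g - r = 2$ is comfortable, I expect that a single small prime---presumably $p = 7$ or $p = 11$, where $\#\cpc{4,0}(\FF_p)$ is modest---suffices, with the sieve available as a fallback.
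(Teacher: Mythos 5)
Your proposal follows essentially the same route as the paper: a Chabauty--Coleman computation at a small odd prime of good reduction (the paper takes $p=3$), pairing the two-dimensional space of annihilating differentials---computed from divisors supported on the known points, which generate a finite-index subgroup of $\jpc{4,0}(\QQ)$ under the rank hypothesis---against the residue classes of $\cpc{4,0}(\FF_p)$, where reduction gives a bijection with the ten known points. The contingency you anticipate does in fact arise: at $p=3$ both annihilating differentials vanish at the reductions of $P_1$ and $P_8$, and the paper handles those two residue discs precisely by the higher-order analysis of the integrated power series (a simple common zero mod $3$ plus Hensel's Lemma) that you list as a fallback.
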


\begin{proof}
  By Theorem~\ref{Thm: kappa is 10},
  the statement is equivalent to $\cpc{4,0}(\QQ) = \{P_1, P_2, \dots, P_{10}\}$,
  and this is what we will prove under the assumption that the rank
  of~$\jpc{4,0}(\QQ)$ is~$3$. We will again use the Chabauty-Coleman method,
  in a similar spirit as in the proof of Proposition~\ref{Prop: 2-4-2}. We refer
  again to~\cite{Stoll_Independence_2006} for the general background and
  to~\cite{Stoll3} for another example done in a similar way.
  
  Since $p = 3$ is the smallest odd prime of good reduction, we will work
  over~$\QQ_3$. Recall that there is a pairing between $\jpc{4,0}(\QQ_3)$
  and the space of regular differentials with coefficients in~$\QQ_3$
  on~$\cpc{4,0}$ that is $\QQ_3$-linear in the second argument and becomes
  $\ZZ_3$-linear in the first argument when restricted to the kernel of
  reduction. Its kernel on the left is the torsion subgroup, and the kernel
  on the right is trivial.
  Our assumption then implies that there are two linearly independent
  differentials that annihilate the Mordell-Weil group~$\jpc{4,0}(\QQ)$.
  In order to determine them, we first find three independent points in
  the kernel of reduction. Taking $P_5$ as base-point for an embedding
  $\iota : \cpc{4,0} \to \jpc{4,0}$, we can take for example
  \begin{align*}
    Q_1 &= \iota(P_2) + \iota(P_4) + \iota(P_6) + \iota(P_7)
             + \iota(P_9) + \iota(P_{10})\,, \\
    Q_2 &= -\iota(P_2) - \iota(P_4) - \iota(P_6) + \iota(P_7)
             - \iota(P_8) + \iota(P_{10})\,, \\
    Q_3 &= 2 \iota(P_1) - \iota(P_2) + 2 \iota(P_6) - \iota(P_8) + \iota(P_{10})\,.
  \end{align*}
  We represent each~$Q_j$ by a divisor of the form $D_j - 5 P_5$ with an
  effective divisor~$D_j$ of degree~$5$ all of whose points have the same
  reduction mod~$3$ as~$P_5$. We pick a uniformizer~$t$ at~$P_5$ that
  reduces to a uniformizer at the reduction of~$P_5$ on $\cpc{4,0}/\FF_3$.
  Since we are using the canonical model of~$\cpc{4,0}$, the differentials
  correspond to hyperplane sections. We fix a differential~$\omega_0$
  that reduces to a non-zero differential mod~$3$ and
  corresponds to~$Z_0 = 0$, and then set $\omega_k = (Z_k/Z_0) \omega_0$,
  for $k = 1,2,3,4$. This gives us a basis $\omega_0, \dots, \omega_4$
  of the space of regular differentials.
  We can then write each $\omega_k$ in the form
  \[ \omega_k = f_k(t)\,dt \qquad\text{with a power series $f_k \in \QQ[\![t]\!]$.} \]
  We integrate formally to obtain power series $l_k(t)$. Writing
  $D_j = P_{j,1} + \ldots + P_{j,5}$, we obtain the value of the pairing
  between $Q_j$ and~$\omega_k$ as
  \[ \langle Q_j, \omega_k \rangle = \sum_{i=1}^5 l_k\bigl(t(P_{j,i})\bigr) \,. \]
  The power series converge at $t(P_{j,i})$ since the latter has positive
  valuation. Doing the computation to an absolute precision of~$O(3^5)$,
  we obtain the following result.
  \[ \bigl(\langle Q_j, \omega_k \rangle\bigr)_{1 \le j \le 3, 0 \le k \le 4}
      \equiv  3 \begin{pmatrix}
                    0 & -15 &  24 &  17 & -29 \\
                  -13 & -25 &  23 & -33 &  17 \\
                   13 & -40 & -34 &  17 & -12
                \end{pmatrix}
      \bmod 3^5
  \]
  We compute the kernel of the matrix and find that
  \[ \eta_1 = \omega_1 + 32 \omega_3 + 35 \omega_4 \qquad\text{and}\qquad
     \eta_2 = \omega_2 - 7 \omega_3 - 34 \omega_4
  \]
  are (up to $O(3^4)$) a basis of the space of differentials annihilating
  the Mordell-Weil group. In particular, the reductions of these differentials
  mod~$3$ correspond to the hyperplane sections
  \[ Z_1 - Z_3 - Z_4 = 0 \qquad\text{and}\qquad Z_2 - Z_3 - Z_4 = 0 \] 
  on~$\cpc{4,0}/\FF_3$.

  We determine the set $\cpc{4,0}(\FF_3)$ and observe that reduction mod~$3$
  gives a bijection between $\{P_1, P_2, \dots, P_{10}\}$ and this set.
  Writing $\bar{P}$ for the reduction mod~$3$ of a point $P \in \cpc{4,0}(\QQ)$,
  we see that at least one of the reductions of $\eta_1$ and~$\eta_2$ does
  not vanish at~$\bar{P}_j$, for all $j \in \{1,2,\dots,10\}$ except
  $j = 1$ and~$j = 8$. According to~\cite[Prop.~6.3]{Stoll_Independence_2006},
  this implies that for all $j \in \{2,3,4,5,6,7,9,10\}$, $P_j$ is the only
  rational point that reduces mod~$3$ to~$\bar{P}_j$. It remains to deal
  with the residue classes of $P_1$ and~$P_8$. Here we will exploit the
  fact that we have two independent differentials at our disposal.
  Let $P = P_1$ or~$P_8$ and let, similarly as before, $T$ denote a
  uniformizer at~$P$ that reduces to a uniformizer at~$\bar{P}$. We
  write $\eta_1$ and~$\eta_2$ as power series in~$T$ times~$dT$
  and integrate to obtain two analytic functions $\lambda_1$ and~$\lambda_2$
  on the residue class of~$P$ (parameterized by $T \in 3\ZZ_3$)
  that have to vanish at all rational points in that residue class.
  We write $T = 3\tau$ with $\tau \in \ZZ_3$ and consider $\lambda_1$
  and~$\lambda_2$ as power series in~$\tau$. These power series converge
  on all of~$\ZZ_3$, so their coefficients tend to zero $3$-adically.
  By Hensel's Lemma, a simple zero mod~$3$ of such a power series will lift
  to a unique zero in~$\ZZ_3$. Now we extract the mod~$3$ part of the
  two power series. This gives us two polynomials in~$\FF_3[\tau]$.
  We check that their greatest common divisor is~$\tau$, both for $j = 1$
  and for~$j = 8$. For example, with our choice of uniformizer at~$P_1$
  we obtain
  \[ \lambda_1(\tau) \equiv \tau^3 + \tau^2 + \tau \bmod 3 \qquad\text{and}\qquad
     \lambda_2(\tau) \equiv \tau^3 + 2 \tau^2 + \tau \bmod 3 \,.
  \]
  Since the $\tau$-value of a rational point must be a zero of both
  $\lambda_1$ and~$\lambda_2$, we see that $\tau \equiv 0 \bmod 3$ and that
  we have a simple root mod~$3$ of at least one (and in fact, both) of the
  functions. This then implies that there is at most one common root
  in~$\ZZ_3$, which is taken care of by the known point~$P$. So $P$ is the only
  rational point in its residue class. This completes the proof.
\end{proof}



\section{L-series computations} \label{Sec: BSD}

In this section, we prove Theorem~\ref{Thm: BSD bound for the rank}, which states
that, assuming standard conjectures on L-series, we have $L'''(\jpc{4,0},1) \neq 0$.
In the end, this comes down to a numerical computation that can be performed
for example with Dokchitser's implementation of his method~\cite{Dokchitser}
in~\textit{Magma}. This procedure needs as input the conductor, the sign of
the functional equation and sufficiently many coefficients of the L-series.
The package also contains a function that checks numerically if the L-series
satisfies the functional equation with the given conductor and sign.

The L-series has an Euler product expansion $L(\jpc{4,0}, s) = \prod_p L_p(p^{-s})^{-1}$
with polynomials $L_p \in \ZZ[X]$ of degree at most~$10$.
When $p$ is a prime of good reduction,
then the degree is exactly~$10$, and the coefficient of $X^k$ in~$L_p$ can
be determined by counting the $\FF_{p^e}$-points on~$\cpc{4,0}$ for
$1 \le e \le \min\{k, 5\}$. More precisely, for certain algebraic integers~$\alpha_i$
we have
\[ L_p(X) = \prod_{i=1}^{10} (1 - \alpha_i X) \]
and
\[ \#\cpc{4,0}(\FF_{p^e}) = p^e + 1 - \sum_{i=1}^{10} \alpha_i^e \,, \]
and $L_p$ satisfies the functional equation $L_p(X) = p^5 X^{10} L_p(1/pX)$.
The point counts give us the power sums of the $\alpha_i$, which in turn
determine the elementary symmetric polynomials. If we know the coefficients
up to~$X^5$, then the functional equation of~$L_p$ provides us with the
remaining ones. So, at least in principle, we can compute all the necessary
coefficients of the Euler factors for primes of good reduction.

It remains to deal with the primes of bad reduction: $p = 2, 23, 2551$.
We can find the Euler factor and obtain information on the conductor by
constructing a regular proper model of the curve over~$\ZZ_p$. For other
examples of such computations, see \cite{FLSSSW} or~\cite{PSS}.

We deal with the bad primes in increasing order of difficulty. We begin
with $p = 2551$. The curve $\cpc{4,0}/\FF_{2551}$ has only one singularity,
which is a simple node with both tangent directions defined over~$\FF_{2551}$.
This point is regular on $\cpc{4,0}/\ZZ_{2551}$, so we already have a regular
model. The graph associated to the special fiber has one loop; Frobenius
acts trivially on the first homology. We see that the reduction of~$\jpc{4,0}$
has a 4-dimensional abelian and a 1-dimensional toric part. The Euler factor is
\[ L_{2551}(X) = (1 - X) \tilde{L}(X) = 1 + 185 X + \cdots \,, \]
where $\tilde{L}$ is the Euler factor of the smooth projective model
of~$\cpc{4,0}/\FF_{2551}$. (We do not need further coefficients of~$L_{2551}$
for the computation.) The exponent of the conductor at $p = 2551$ is~$1$.

Now consider $p = 23$. Here the reduction of~$\cpc{4,0}$ has three distinct
singularities, which are simple nodes with tangent directions defined
over~$\FF_{23}$. All three points are regular on~$\cpc{4,0}/\ZZ_{23}$.
The graph associated to the special fiber has three independent loops,
with Frobenius again acting trivially on the homology. We deduce that
the reduction of~$\jpc{4,0}$ has a 2-dimensional abelian and a 3-dimensional
toric part.
Let $\tilde{L}(X)$ be the Euler factor of the smooth projective model
of $\cpc{4,0}/\FF_{23}$ (a curve of genus~$2$), then
\[ L_{23}(X) = (1 - X)^3 \tilde{L}(X)
             = 1 + 10 X + 50 X^2 + 79 X^3 -123 X^4 - 776 X^5 + 1288 X^6 - 529 X^7 \,.
\]
The exponent of the conductor at $p = 23$ is~$3$.

Finally take $p = 2$. Here we have a single singularity on the reduction,
which is, however, a more complicated singularity than a simple node, and
it is a non-regular point on~$\cpc{4,0}/\ZZ_2$. So we blow it up. The
special fiber of the resulting model consists of the strict transform of the
original special fiber together with four lines meeting it in one point,
which is again not regular. So we now blow up this point, which produces
two further lines, each of multiplicity~$3$, meeting the original component
in a (now regular) point and each meeting two of the lines that showed up
at the previous stage at distinct regular points. If we label the four lines
that were obtained after the first blow-up by $A$, $B$, $C$, $D$, the
original component by~$E$ (which is now a smooth curve of genus~$0$)
and the two new lines (of multiplicity~$3$) by $F$ and~$G$ (see the configuration
sketched in Figure~\ref{fig1} on the right),
then we obtain the matrix of intersection numbers on the left in Figure~\ref{fig1}.

\begin{figure}[ht]
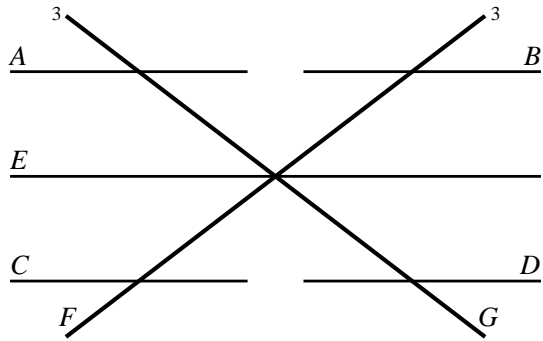

  \begin{center}
  \begin{minipage}{0.45\textwidth}
    \[
    \begin{array}{r|ccccccc|}
        &  A &  B &  C &  D &  E &  F &  G \\\hline
      A & -3 &  0 &  0 &  0 &  0 &  0 &  1 \\
      B &  0 & -3 &  0 &  0 &  0 &  1 &  0 \\
      C &  0 &  0 & -3 &  0 &  0 &  1 &  0 \\
      D &  0 &  0 &  0 & -3 &  0 &  0 &  1 \\
      E &  0 &  0 &  0 &  0 & -6 &  1 &  1 \\
      F &  0 &  1 &  1 &  0 &  1 & -2 &  1 \\
      G &  1 &  0 &  0 &  1 &  1 &  1 & -2 \\\hline
    \end{array}
    \]
  \end{minipage}
  \hfill
  \begin{minipage}{0.45\textwidth}
  \Gr{specialfiber1}{\textwidth}  
  \end{minipage}
  \end{center}
  \caption{\label{fig1}
         Special fiber of the minimal regular model of $\cpc{4,0}$ over
         $\QQ_2$ (right) and the corresponding intersection matrix (left).}
\end{figure}

\medskip

From this, one may conclude that the group of connected components of the
special fiber of the N\'eron model of~$\jpc{4,0}$ over~$\ZZ_2$ is isomorphic
to~$\ZZ/3\ZZ \times \ZZ/21\ZZ$, but we will not need this information in what
follows. The configuration of components in the special fiber of the regular
model of~$\cpc{4,0}$ is tree-like and all components have genus~$0$, so
the reduction of~$\jpc{4,0}$ is totally unipotent, and the Euler factor
is $L_2(X) = 1$. We also see that the tame part of the conductor exponent
at~$p = 2$ is~$10$. We summarize the information we have obtained on the
conductor.

\begin{lem}
  The conductor of $\jpc{4,0}$ is $2^{10+w} \cdot 23^3 \cdot 2551$,
  where $w \ge 0$ is the wild part of the conductor at~$2$.
\end{lem}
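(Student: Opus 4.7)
The plan is to invoke the standard local formula for the conductor exponent of an abelian variety $A/\QQ_p$,
\[ f_p(A) = 2\dim U_p + \dim T_p + \delta_p, \]
where $U_p$ and $T_p$ are the unipotent and toric parts of the identity component of the special fiber of the Néron model of $A$ over $\ZZ_p$, and $\delta_p\ge 0$ is the Swan conductor. Since the only primes of bad reduction are $2$, $23$, and $2551$ (verified with \textit{Magma} at the very beginning of the section), I would apply this formula prime by prime using the reduction data extracted in the preceding paragraphs, and then multiply together.

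At $p = 2551$ the analysis above produced a $4$-dimensional abelian part and a $1$-dimensional toric part, with $\dim U_{2551} = 0$. The reduction is therefore semistable, so inertia acts unipotently on the $\ell$-adic Tate module and the Swan conductor vanishes automatically. The formula then yields $f_{2551} = 2\cdot 0 + 1 + 0 = 1$. The same argument at $p = 23$, where the abelian part is $2$-dimensional and the toric part is $3$-dimensional, gives $f_{23} = 3$. At $p = 2$, the minimal regular model constructed above consists of a tree-like configuration of rational components, so after passing to the Néron model the entire $5$-dimensional identity component of $\jpc{4,0}_s$ is unipotent, i.e.\ $\dim U_2 = 5$ and $\dim T_2 = 0$. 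The tame part of the conductor exponent is therefore $2\cdot 5 = 10$, and there remains a possibly nonzero wild contribution $\delta_2 =: w \ge 0$. Collecting the three local contributions gives
\[ N(\jpc{4,0}) = 2^{10+w}\cdot 23^3 \cdot 2551. \]

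The only nontrivial step in this proposal is justifying $\delta_p = 0$ at $p = 23$ and $p = 2551$. This is not a separate computation but a consequence of semistability: absence of a unipotent part in the Néron special fiber forces inertia to act unipotently on the Tate module, which in turn means the wild inertia acts trivially and the Swan conductor is zero. At $p = 2$ this shortcut is unavailable because the reduction is purely unipotent rather than semistable, so the wild part can genuinely be positive. Determining $w$ is the hard remaining task, and I would expect it to require either passing to a tame extension over which the curve acquires semistable reduction (so that the new wild conductor becomes visible and can be bounded) or a direct analysis of the action of the wild inertia on $\ell$-torsion using the regular model; this is precisely the work deferred to the subsequent part of the paper.
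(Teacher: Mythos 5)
Your proposal is correct and follows essentially the same route as the paper: the lemma is just the bookkeeping of the local conductor exponents extracted from the regular models at $2$, $23$, and $2551$ (semistable with toric dimensions $1$ and $3$ at the odd bad primes, hence exponents $1$ and $3$; totally unipotent at $2$, hence tame part $2\cdot 5 = 10$ plus an unknown wild part $w$). The only addition you make is to state explicitly the formula $f_p = 2\dim U_p + \dim T_p + \delta_p$ and the semistability argument for $\delta_{23} = \delta_{2551} = 0$, which the paper uses implicitly.
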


Note that this is consistent with the fact that the elliptic curve~$E$ 
of conductor $92 = 2^2 \cdot 23$ occurs as a factor of $\jpc{4,0}$.
We can conclude that the simple 4-dimensional factor of~$\jpc{4,0}$
has conductor $2^{8+w} \cdot 23^2 \cdot 2551$.

In fact, we can say more.

\begin{prop} \label{Prop-cond}
  We have $w = 0$, so that the conductor of $\jpc{4,0}$ is
  $2^{10} \cdot 23^3 \cdot 2551$.
\end{prop}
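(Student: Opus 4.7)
The plan is to exhibit a tamely ramified extension $K$ of~$\QQ_2$ over which $\cpc{4,0}$ acquires semistable reduction, with every component and every node of the special fiber of the resulting regular model rational over the residue field of~$K$. Granting this, the toric part~$T$ of the reduction of~$\jpc{4,0}$ over~$K$ is a split torus, so the inertia subgroup $I_K \subset \mathrm{Gal}(\bar{\QQ}_2/K)$, and therefore the wild inertia $P_2 \subset I_K$ (because $K/\QQ_2$ is tame), acts trivially on the character group~$X^*(T)$. Consequently the Swan conductor of the Galois representation of $\mathrm{Gal}(\bar{\QQ}_2/\QQ_2)$ on $T_\ell\,\jpc{4,0}$ vanishes for any odd~$\ell$, and therefore $w=0$.

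The natural candidate for~$K$ is dictated by the multiplicities $(1,1,1,1,1,3,3)$ of the components of the minimal regular model over~$\ZZ_2$ displayed in Figure~\ref{fig1}: their least common multiple is~$3$, coprime to $p=2$, so a tame ramified extension of index~$3$ (together with an unramified extension of degree~$2$ to contain cube roots of unity) should suffice. I would therefore work over $K = \QQ_2(\zeta_3, 2^{1/3})$, tame of ramification index~$3$ and residue degree~$2$, and base-change the model of Figure~\ref{fig1} to $\mathcal{O}_K$.

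The concrete work is then a sequence of blow-ups resolving the singularities introduced by the base change along the multiplicity-$3$ components $F$ and~$G$. Locally, a node $xy = u$ of the $\ZZ_2$-model (with $u$ a uniformizer) becomes the $A_2$-singularity $xy = \pi^3$ over~$\mathcal{O}_K$, resolved by two successive blow-ups into a chain of two smooth rational curves meeting transversally. The slightly more delicate points are those where $E$ meets $F$ and~$G$, on account of the self-intersection~$-6$ of~$E$; but here too an explicit local analysis suffices. After performing the blow-ups and contracting any $(-1)$-curves that appear, one must verify three things: (i)~the new special fiber is strictly semistable, (ii)~every component and every intersection point is rational over~$\FF_4$, and (iii)~the first Betti number of the dual graph equals~$5$, matching the toric rank of the reduction of~$\jpc{4,0}$.

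The principal obstacle is the explicit bookkeeping for these blow-ups and the verification of~(iii); items (i) and~(ii) follow from the local calculations together with the observation that $\FF_4$ already contains all the residues (such as cube roots of unity) that can arise in the resolution. Once (i)--(iii) are in hand, the standard identification $X^*(T) \cong H_1(\Gamma, \ZZ)$ for $\Gamma$ the dual graph of the semistable special fiber gives a trivial $I_K$-action on~$X^*(T)$, which completes the argument.
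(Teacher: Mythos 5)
Your overall strategy---pass to a tamely ramified extension over which the reduction becomes semistable, so that the wild inertia at $2$, being pro-$2$, is forced to act unipotently and hence trivially on $T_\ell\,\jpc{4,0}$ for odd $\ell$, whence $w=0$---is sound and is in the same spirit as the paper's argument. The concrete execution, however, has a genuine gap: the extension $K=\QQ_2(\zeta_3,2^{1/3})$ does not suffice. The lcm-of-multiplicities heuristic is valid only for a model whose special fiber is a \emph{strict normal crossings} divisor, and the minimal regular model of Figure~\ref{fig1} is not one: the components $E$, $F$, $G$ all pass through a single point (this is forced by the configuration being tree-like, as the paper notes, despite $E\cdot F=E\cdot G=F\cdot G=1$). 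Blowing up that triple point to reach a normal crossings model introduces an exceptional component of multiplicity $1+3+3=7$, so the relevant lcm is $21$, not $3$. Indeed, Lemma~\ref{Lemma-tame3} shows by direct computation that over $\QQ_2(\sqrt[3]{2})$ the minimal regular model consists of two genus-$1$ curves meeting in one point with intersection multiplicity $3$, plus a rational curve through that point---not a semistable configuration---and since minimal regular models and semistability are unaffected by unramified base change, adjoining $\zeta_3$ cannot repair this. Over your $K$ neither $\cpc{4,0}$ nor $\jpc{4,0}$ is semistable, so the argument never gets started; one must work over (an unramified extension of) $\QQ_2(\sqrt[21]{2})$.

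A second, related error is your verification step (iii): you expect the semistable special fiber to be totally degenerate, with dual graph of first Betti number $5$ and a split $5$-dimensional torus. The truth is the opposite: $\jpc{4,0}$ has \emph{potentially good} reduction at $2$. Over $\QQ_2(\sqrt[21]{2})$ the special fiber of the minimal regular model contains multiplicity-$1$ components of genera $1,1,3$ (Lemmas~\ref{Lemma-tame3} and~\ref{Lemma-tame7}), whose genera already sum to $5$, so the reduction is totally abelian and the toric rank is $0$; your criterion (iii) would therefore fail even over the correct field. This is also how the paper sidesteps the bookkeeping you anticipate: rather than constructing a semistable model and analyzing a torus (whose splitness is in any case irrelevant to the Swan conductor---unipotence of the $I_K$-action already kills the wild part), it exhibits the positive-genus multiplicity-$1$ components over $\QQ_2(\sqrt[3]{2})$ and $\QQ_2(\sqrt[7]{2})$, concludes good reduction over the tame extension $\QQ_2(\sqrt[21]{2})$, and invokes tameness of the $l$-division fields to get $w=0$. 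To salvage your write-up you would need to replace $K$ by this degree-$21$ ramified extension and redo the analysis with the totally abelian (good-reduction) picture in mind.
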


\begin{proof}
  We show that $\jpc{4,0}$ acquires good reduction over $\QQ_2(\sqrt[21]{2})$,
  which is a tamely ramified extension of~$\QQ_2$. It follows that the
  $l$-division field $\QQ(\jpc{4,0}[l])$ of~$\jpc{4,0}$ is tamely ramified
  at~2, for any odd prime~$l$. This in turn implies that the wild part of
  the conductor vanishes, see for example~\cite{BrumerKramer}.
  
  We will establish the following assertions.
  
  \begin{lem} \label{Lemma-tame3}
    The special fiber of the minimal regular model of~$\cpc{4,0}$ over
    $\QQ_2(\sqrt[3]{2})$ contains two components of multiplicity~1 that
    are curves of genus~1.
  \end{lem}
  
  \begin{lem} \label{Lemma-tame7}
    The special fiber of the minimal regular model of~$\cpc{4,0}$ over
    $\QQ_2(\sqrt[7]{2})$ contains a component of multiplicity~1 that
    is a curve of genus~3.
  \end{lem}
  
  Since multiplicity-1 components of positive genus in the special fiber
  persist under field extensions, Lemma~\ref{Lemma-tame3} and
  Lemma~\ref{Lemma-tame7} together imply that the special fiber of the
  minimal regular model of~$\cpc{4,0}$ over~$\QQ_2(\sqrt[21]{2})$
  contains two curves of genus~1 and a curve of genus~3. Since the sum
  of their genera equals the genus of~$\cpc{4,0}$, we see that $\jpc{4,0}$
  has totally abelian, and therefore good, reduction over~$\QQ_2(\sqrt[21]{2})$.
\end{proof}

\begin{figure}[ht]
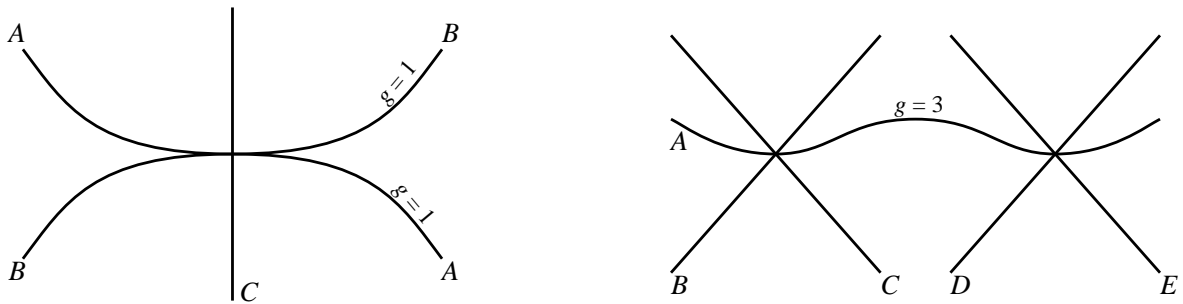

\begin{center}
  \Gr{specialfiber3}{0.45\textwidth} \hfill \Gr{specialfiber7}{0.45\textwidth}
\end{center}
\caption{\label{fig2}
         Special fibers of minimal regular models of $\cpc{4,0}$ over
         $\QQ_2(\sqrt[3]{2})$ (left) and $\QQ_2(\sqrt[7]{2})$ (right).}
\end{figure}

It remains to prove the two lemmas.

\begin{proof}[Proof of Lemma~\ref{Lemma-tame3}]
  In principle, this can be proved by computing the minimal regular model,
  which turns out to have a special fiber consisting of three components,
  all of multiplicity~1:
  two smooth curves of genus~1 meeting in one point with intersection
  multiplicity~3, and a smooth curve of genus~0 meeting them both
  transversally in their point of intersection, see Figure~\ref{fig2}, left.
  
  To simplify the verification of the claim, we give here another model
  of~$\cpc{4,0}$ over~$\QQ_2(\sqrt[3]{2})$ whose reduction contains the
  two genus-1 curves. (This is one of the charts one encounters when
  computing the successive blow-ups necessary to find a regular model.)
  We begin with the projective model
  \begin{align*}
    Z_1 Z_4 + Z_3^2 - Z_0^2 &= 0 \\
    Z_2 Z_4 + Z_3^2 - Z_1^2 &= 0 \\
    Z_3 Z_4 + Z_3^2 - Z_2^2 &= 0
  \end{align*}
  that was described in Section~\ref{Sec: Alternate}. 
  We dehomogenize by setting $Z_0 = 1$ and move
  the singularity to the origin by replacing $Z_1 \leftarrow z_1+1$,
  $Z_2 \leftarrow z_2+1$, $Z_3 \leftarrow z_3+1$ and $Z_4 \leftarrow z_4$.
  After subtracting
  the first equation from the other two, and including a redundant
  equation, we arrive at the following affine model.
  \begin{align*}
    z_1 z_4 + z_3^2 + 2 z_3 + z_4 &= 0 \\
    z_2 z_4 - z_1 z_4 - z_1^2 - 2 z_1 &= 0 \\
    z_3 z_4 - z_1 z_4 - z_2^2 - 2 z_2 &= 0 \\
    z_5 - \sqrt[3]{2} &= 0
  \end{align*}
  Note that the last equation implies $z_5^3 - 2 = 0$.
  We add $z_3$, $-z_1$, $-z_2$ times $z_5^3 - 2$, respectively,
  to the first three equations; then we substitute as follows:
  \begin{align*}
    z_1 &\leftarrow (x_1 x_3 + x_2)^2 x_2^4 x_3^3 x_4^3 \\
    z_2 &\leftarrow (x_1 x_3 + x_2)^2 x_2^3 x_3^3 x_4^2 \\
    z_3 &\leftarrow (x_1 x_3 + x_2) x_2^2 x_3^2 x_4^2 \\
    z_4 &\leftarrow (x_1 x_3 + x_2)^2 x_2^4 x_3^4 x_4^3 x_5 \\
    z_5 &\leftarrow (x_1 x_3 + x_2) x_2 x_3 x_4.
  \end{align*}
  We can then divide the four equations by
  \[ (x_1 x_3 + x_2)^2 x_2^4 x_3^4 x_4^3, \quad
     (x_1 x_3 + x_2)^4 x_2^7 x_3^6 x_4^5, \quad
     (x_1 x_3 + x_2)^3 x_2^6 x_3^6 x_4^4 \quad\text{and}\quad
     1,
  \]
  respectively. Let $e_1, e_2, e_3, e_4$ be the resulting polynomials,
  and let $e'_4 = (e_4 + \sqrt[3]{2})^3 - 2$ (this corresponds to $z_5^3 - 2$). 
  
  Now observe that
  \[ e'_2 = \frac{e_2 - x_3 e_1 - x_2 x_4 e'_4}{x_3 x_4} \]
  is again a polynomial; $e_1, e'_2, e_3, e_4$ define the desired affine model.
  Its reduction mod~$\sqrt[3]{2}$ contains the curves of genus~1 given by
  \[ x_2^2 x_5 + x_5^2 + x_2 = 0, \quad x_1 = 1 + x_2 x_5, \quad
     x_3 = 0, \quad x_4 = x_5
  \]
  and
  \[ x_3^2 x_5 + x_5^2 + x_3 = 0, \quad x_1 = 1, \quad x_2 = 0, \quad x_4 = x_5, \]
  respectively.
  
  Finally, we note that the above operations do indeed determine a birational map on generic fibers. For  $x_1 x_3 + x_2$, $x_2$, $x_3$ and $x_4$ must all be nonzero,
since otherwise $z_5$ would vanish, but it is equal to $\sqrt[3]{2}$. So to go back,
we can safely multiply the equations by powers of these expressions, and then
set
\begin{align*}
 x_1 &= z_1 (z_5^3 - z_1)/(z_2 z_3^2) \\
 x_2 &= z_1/(z_3 z_5) \\
 x_3 &= z_2 z_3/(z_1 z_5) \\
 x_4 &= z_3 z_5/z_2 \\
 x_5 &= z_4 z_5/(z_2 z_3),
 \end{align*}
and simplify.
\end{proof}

\begin{proof}[Proof of Lemma~\ref{Lemma-tame7}]
  This is analogous to the proof of Lemma~\ref{Lemma-tame3}. Here the
  special fiber of the minimal regular model over~$\QQ_2(\sqrt[7]{2})$
  has five components, all of multiplicity~1. One of them is a smooth
  genus-3 curve~$C$; the other four are smooth genus-0 curves, the first
  two of which meet $C$ pairwise transversally in one point, and the same
  is true of the remaining two, see Figure~\ref{fig2} on the right.
  
  As in the proof of Lemma~\ref{Lemma-tame3}, we make the genus-3 component
  explicit. This time, we dehomogenize by setting $Z_3 = 1$, then
  we eliminate $Z_4 = Z_2^2 - 1$. We obtain an affine model
  \[ Z_1 (Z_2^2 - 1) + 1 - Z_0^2 = Z_2 (Z_2^2 - 1) + 1 - Z_1^2 = 0 \,. \]
  We substitute $Z_0 \leftarrow z_0 + z_1 + z_2 + 1$,
  $Z_1 \leftarrow z_1 + z_2 + 1$, $Z_2 \leftarrow z_2 + 1$, and replace
  the first equation by its difference with the second. This gives the
  new model
  \[ -z_0^2 - 2 z_0 z_1 - 2 z_0 z_2 - 2 z_0 + z_1 z_2^2 + 2 z_1 z_2
      = -z_1^2 - 2 z_1 z_2 - 2 z_1 + z_2^3 + 2 z_2^2 = 0 \,.
  \]
  Now, writing $\pi = \sqrt[7]{2}$, we set $z_0 = \pi^7 x_0$, $z_1 = \pi^6 x_1$
  and $z_2 = \pi^4 x_2$. We can then divide the first equation by~$\pi^{14}$
  and the second by~$\pi^{12}$. The reductions mod~$\pi$ of the two equations
  are then
  \[  x_0 +  x_0^2 + x_1 x_2^2 = 0 \quad\text{and}\quad  x_1^2 + x_2^3 = 0 \,; \]
  they define a curve of (geometric) genus~3 (with two singular points at
  $x_1 = x_2 = 0$).
\end{proof}

\begin{remark}
  A priori, we know that $w$ is the wild part of the conductor of the
  simple 4-dimensional factor of~$\jpc{4,0}$. According to~\cite{BrumerKramer},
  the best general bound is then $w \le 40$. Using the result of
  Lemma~\ref{Lemma-tame3} and \cite[Prop.~6.11]{BrumerKramer}, we can
  reduce the bound to $w \le 22$. Using the result of Lemma~\ref{Lemma-tame7}
  only and the fact that the elliptic curve factor~$E$ still has additive
  reduction over~$\QQ_2(\sqrt[7]{2})$ (of type~IV), the bound drops
  to $w \le 6$. Using any of these bounds, one could prove that
  $w$ must vanish (under the assumption that the L-series has an analytic
  continuation and satisfies the correct functional equation) by checking
  numerically that the functional equation fails for any other possible
  value of~$w$. However, to do this kind of computation with reasonable
  precision, one needs an increasing number of L-series coefficients,
  which may become prohibitive for large~$w$.
\end{remark}

Using a sufficient number of L-series coefficients, we check that the
numerical criterion for the global functional equation is satisfied with
the conductor as established in Proposition~\ref{Prop-cond}
and sign~$-1$ as suggested by the parity of the
rank of the known subgroup of~$\jpc{4,0}(\QQ)$. As an additional check,
we performed the same computation with sign~$+1$, which clearly shows
that the functional equation does not hold with sign~$+1$.
If we assume that the L-series of~$\jpc{4,0}$ satisfies a
functional equation of the expected kind, then we must have sign~$-1$.

Having determined the conductor and the sign, we can compute the values
of the L-series and its higher derivatives at~$s = 1$. We obtain
\[ L(\jpc{4,0}, 1) \approx 0, \quad
   L'(\jpc{4,0}, 1) \approx 0, \quad
   L''(\jpc{4,0}, 1) \approx 0
\]
and
\[ L'''(\jpc{4,0}, 1) \approx 5.0846055622 \,. \]
This verifies that the third derivative does not vanish. 
The fact that the first derivative vanishes (within the precision of the
computation) as predicted by the conjecture of Birch and Swinnerton-Dyer
provides further corroboration that the conjctures we are assuming hold
in our case.
(The vanishing of the even derivatives is implied by the functional equation,
of course).


\medskip

The conjecture of Birch and Swinnerton-Dyer claims that the Mordell-Weil
rank of~$\jpc{4,0}$ is equal to the order of vanishing of~$L(\jpc{4,0},s)$
at~$s = 1$. Our computation shows that this order of vanishing is at most~$3$.
This gives an upper bound of~$3$ for the Mordell-Weil rank. On the other hand,
we already know that the rank is at least~$3$; see Theorem~\ref{thm_jac_properties}.
So the rank must be exactly~$3$, which implies by Theorem~\ref{Thm: Bound for X(4,0)}
 that $\kappa(0) = 6$.


\section*{Acknowledgments} We would like to thank the American Institute of Mathematics for hosting the workshop on ``The Uniform Boundedness Conjecture in Arithmetic Dynamics'' in January 2008. This project was conceived there.  We thank Michelle Manes for some of the early computations, 
Rob Benedetto for comments on an earlier draft of this paper, and the anonymous referee for several insightful comments. The first author was supported by a National Science Foundation Postdoctoral Research Fellowship during part of this work.

\bibliographystyle{plain}	
\bibliography{pre-image_zero}

\providecommand\biburl[1]{\texttt{#1}}
\begin{thebibliography}{10}

\bibitem{magma}
Wieb Bosma, John Cannon, and Catherine Playoust.
\newblock The {M}agma algebra system. {I}. {T}he user language.
\newblock {\em Journal of Symbolic Computation}, 24(3-4):235--265, 1997.

\bibitem{BruinStoll}
Nils Bruin and Michael Stoll.
\newblock The {M}ordell-{W}eil sieve: proving non-existence of rational points
  on curves.
\newblock {\em LMS Journal of Computation and Mathematics}, 13(-1):272--306,
  2010.

\bibitem{BrumerKramer}
Armand Brumer and Kenneth Kramer.
\newblock The conductor of an abelian variety.
\newblock {\em Compositio Math.}, 92(2):227--248, 1994.

\bibitem{Cremona_Data}
J.~E. Cremona.
\newblock {\em Elliptic {C}urve {D}ata}, 2008.
\newblock \verb+http://www.warwick.ac.uk/~masgaj/ftp/data/INDEX.html+.

\bibitem{CrePriSik}
J.~E. Cremona, M.~Prickett, and Samir Siksek.
\newblock Height difference bounds for elliptic curves over number fields.
\newblock {\em J. Number Theory}, 116(1):42--68, 2006.

\bibitem{Dokchitser}
Tim Dokchitser.
\newblock Computing special values of motivic {$L$}-functions.
\newblock {\em Experiment. Math.}, 13(2):137--149, 2004.

\bibitem{FHIJMTZ}
Xander Faber, Benjamin Hutz, Patrick Ingram, Rafe Jones, Michelle Manes,
  Thomas~J. Tucker, and Michael~E. Zieve.
\newblock Uniform bounds on pre-images under quadratic dynamical systems.
\newblock {\em Math. Res. Lett.}, 16(1):87--101, 2009.

\bibitem{FPS}
E.~V. Flynn, Bjorn Poonen, and Edward~F. Schaefer.
\newblock Cycles of quadratic polynomials and rational points on a genus-{$2$}
  curve.
\newblock {\em Duke Math. J.}, 90(3):435--463, 1997.

\bibitem{FLSSSW}
E.~Victor Flynn, Franck Lepr{\'e}vost, Edward~F. Schaefer, William~A. Stein,
  Michael Stoll, and Joseph~L. Wetherell.
\newblock Empirical evidence for the {B}irch and {S}winnerton-{D}yer
  conjectures for modular {J}acobians of genus 2 curves.
\newblock {\em Math. Comp.}, 70(236):1675--1697 (electronic), 2001.

\bibitem{Hindry_Silverman_book_2000}
Marc Hindry and Joseph~H. Silverman.
\newblock {\em Diophantine geometry. An Introduction}, volume 201 of {\em
  Graduate Texts in Mathematics}.
\newblock Springer-Verlag, New York, 2000.

\bibitem{Hutz_et_al}
Benjamin Hutz, Trevor Hyde, and Benjamin Krause.
\newblock Pre-images of quadratic dynamical systems.
\newblock \verb+arXiv:1007.0744 [math.NT]+, preprint, 2010.

\bibitem{ManinTorsion}
Ju.~I. Manin.
\newblock The {$p$}-torsion of elliptic curves is uniformly bounded.
\newblock {\em Izv. Akad. Nauk SSSR Ser. Mat.}, 33:459--465, 1969.

\bibitem{Mazur_Torsion_1978}
B.~Mazur.
\newblock Modular curves and the {E}isenstein ideal.
\newblock {\em Inst. Hautes \'Etudes Sci. Publ. Math.}, (47):33--186 (1978),
  1977.

\bibitem{Poonen}
Bjorn Poonen.
\newblock The classification of rational preperiodic points of quadratic
  polynomials over {${\bf Q}$}: a refined conjecture.
\newblock {\em Math. Z.}, 228(1):11--29, 1998.

\bibitem{PSS}
Bjorn Poonen, Edward~F. Schaefer, and Michael Stoll.
\newblock Twists of {$X(7)$} and primitive solutions to {$x^2+y^3=z^7$}.
\newblock {\em Duke Math. J.}, 137(1):103--158, 2007.

\bibitem{StollHeights1}
Michael Stoll.
\newblock On the height constant for curves of genus two.
\newblock {\em Acta Arith.}, 90(2):183--201, 1999.

\bibitem{Stoll2Descent}
Michael Stoll.
\newblock Implementing 2-descent for {J}acobians of hyperelliptic curves.
\newblock {\em Acta Arith.}, 98(3):245--277, 2001.

\bibitem{StollHeights2}
Michael Stoll.
\newblock On the height constant for curves of genus two. {II}.
\newblock {\em Acta Arith.}, 104(2):165--182, 2002.

\bibitem{Stoll_Independence_2006}
Michael Stoll.
\newblock Independence of rational points on twists of a given curve.
\newblock {\em Compos. Math.}, 142(5):1201--1214, 2006.

\bibitem{Stoll3}
Michael Stoll.
\newblock Rational 6-cycles under iteration of quadratic polynomials.
\newblock {\em LMS J. Comput. Math.}, 11:367--380, 2008.

\bibitem{vanluijk}
Ronald van Luijk.
\newblock K3 surfaces with {P}icard number one and infinitely many rational
  points.
\newblock {\em Algebra Number Theory}, 1(1):1--15, 2007.

\end{thebibliography}

\end{document}